\documentclass[11pt]{amsart}

\usepackage{amsmath}
\usepackage{amssymb,amsthm,amstext}
\usepackage{mathtools} 

\usepackage{graphicx}
\usepackage{setspace}
\usepackage{mathrsfs}
\usepackage{bm}
\usepackage{stmaryrd}
\usepackage[left=1.45in,top=1.05in,right=1.45in,bottom=1.0in]{geometry}

\theoremstyle{plain}
\newtheorem{theorem}{Theorem}[section]
\newtheorem*{theorem*}{Theorem}
\newtheorem{proposition}[theorem]{Proposition}
\newtheorem{lemma}[theorem]{Lemma}
\newtheorem{corollary}[theorem]{Corollary}

\newtheorem{theoremintro}{Theorem}

\newtheorem{questionintro}{Question}

\theoremstyle{definition}

\newtheorem{example}[theorem]{Example}

\theoremstyle{remark}
\newtheorem{remark}[theorem]{Remark}
\newtheorem*{remark*}{Remark}
\newtheorem*{example*}{Example}
\newtheorem*{lemma*}{Lemma}

\newcommand{\C}{\mathbb{C}}

\newcommand{\Q}{\mathbb{Q}}
\newcommand{\Z}{\mathbb{Z}}
\newcommand{\F}{\mathbb{F}}
\newcommand{\Fp}{\F_p}
\newcommand{\Fpbar}{\overline{\F}_p}

\newcommand{\Qbar}{\overline{\mathbb{\Q}}}

\DeclareMathOperator{\Spec}{Spec}
\DeclareMathOperator{\Pic}{Pic}
\DeclareMathOperator{\NS}{NS}

\DeclareMathOperator{\Bl}{Bl}
\renewcommand{\P}{\mathbb{P}}

\DeclareMathOperator{\codim}{codim}
\DeclareMathOperator{\Br}{Br}
\DeclareMathOperator{\Db}{\textsf D^{b}}
\DeclareMathOperator{\disc}{disc}

\newcommand{\sheaf}[1]{\mathscr{#1}}
\newcommand{\OO}{\sheaf{O}}
\newcommand{\EE}{\sheaf{E}}
\newcommand{\II}{\sheaf{I}}

\newcommand{\KK}{\mathcal{K}}
\newcommand{\et}{\text{\'et}}
\newcommand{\rk}{\text{rk}}

\DeclareMathOperator{\spe}{sp}
\newcommand{\isom}{\cong}
\newcommand{\Xbar}{\overline{X}}
\newcommand{\Ybar}{\overline{Y}}
\DeclareMathOperator{\Gr}{Gr}
\DeclareMathOperator{\adj}{adj}
\DeclareMathOperator{\tr}{tr}

\usepackage{hyperref}
\hypersetup{pdftitle={Noether--Lefschetz general complete intersection K3 surfaces over Q}}
\hypersetup{pdfauthor={Asher Auel and Henry Scheible}}
\hypersetup{colorlinks=true,linkcolor=blue,anchorcolor=blue,citecolor=blue}

\begin{document}

\title[Noether--Lefschetz general K3 surfaces over $\Q$]{Noether--Lefschetz general\\ complete intersection K3 surfaces over $\Q$}

\author{Asher Auel}
\address{Department of Mathematics, 
Dartmouth College, Kemeny Hall, Hanover, New Hampshire\\\texttt{\it E-mail
address: \tt asher.auel@dartmouth.edu}}

\author{Henry Scheible}
\address{Department of Mathematics, 
Dartmouth College, Kemeny Hall, Hanover, New Hampshire\\\texttt{\it E-mail
address: \tt henry.j.scheible.26@dartmouth.edu}}

\begin{abstract}
We prove that the locus of Noether--Lefschetz general polarized K3 surfaces of degree $d$ defined over $\Q$ is Zariski dense in the moduli space for $d\leq 8$.  Previously, this was proved by van Luijk in the quartic case, and it follows from work of Elsenhans and Jahnel in the degree 2 case.  Innovations on their methods, and employing Mukai's Hodge isogeny, suffices to handle the degree 8 case.  New methods allow us to deal with the case of degree 6.
\end{abstract}

\maketitle

\section*{Introduction}

For each even $d \geq 2$, the moduli space $\KK_d$ of primitively polarized K3 surfaces of degree $d$ is a 19 dimensional quasiprojective variety.   Away from a countably infinite union of divisors in $\KK_d$, a polarized K3 surface of degree $d$ has Picard rank 1 over $\C$. These are the Noether--Lefschetz general K3 surfaces.  We consider the following open question, cf.\ \cite[Ch.~17,~p.~408-409]{huybrechts_lectures_2016}, \cite[Remark~1.3.7]{sosna_derived_2010}, \cite[Problems~1.4--1.5]{noauthor_open_2025}.

\begin{questionintro}
\label{question}
For a given even $d \geq 2$, does there exist a primitively polarized K3 surface $S$ of degree $d$ defined over $\Q$ such that $\Pic(S_\C) \isom \Z$? 
\end{questionintro}

In other words, are there $\Q$-points on the Noether--Lefschetz general locus in $\KK_d$?  Since this locus is the complement of infinitely many divisors, it is not \emph{a priori} clear whether it contains rational points over any given countable field.  However, Ellenberg~\cite{ellenberg_k3_2004} proved that the Noether--Lefschetz general locus in $\KK_d$ does admit $\Qbar$-points for every $d \geq 2$.  Earlier, results of Terasoma~\cite{terasoma_complete_1985} gave a positive answer to Question~\ref{question} for $d=4,6,8$, namely, when the general K3 surface of degree $d$ is a complete intersection.  We remark that a conjecture of Shafarevich~\cite{shafarevich_arithmetic_1996}, which predicts that only finitely many isomorphisms classes of lattices arise as geometric Picard lattices of K3 surfaces defined over a fixed number field, would imply that Question~\ref{question} has a positive answer for only finitely many $d$.  Shafarevich proved the conjecture for K3 surfaces over a number field of geometric Picard rank 20.  Orr and Skorobogatov~\cite{orr_finiteness_2018}, and  Orr~\cite{orr_uniformity_2021}, proved Shafarevich's conjecture for K3 surfaces of CM type.  However, the conjecture seems wide open for K3 surfaces of geometric Picard rank one.

In a similar vein, since $\KK_d$ becomes of general type for $d > 122$ by the work of Gritsenko, Hulek, and Sankaran~\cite[Theorem~6.1]{gritsenko_moduli_2013}, the Bombieri--Lang conjecture would predict that the set of rational points is not Zariski dense, making rational points on the Noether--Lefschetz general locus increasingly rare as $d$ grows.  In contrast, when $d$ is small, $\KK_d$ is unirational and we expect many rational points. Our main result completes the picture for complete intersection K3 surfaces.

\begin{theoremintro}
\label{thm:dense}
 The set of Noether--Lefschetz general K3 surfaces defined over $\Q$ is Zariski dense in $\mathcal{K}_d$ for $d \leq 8$.
\end{theoremintro}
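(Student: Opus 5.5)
The approach is van Luijk's method: specialization at two good primes, combined with an open-condition argument that upgrades a single example to Zariski density. For each $d\in\{2,4,6,8\}$ the K3 surfaces of degree $d$ have concrete models: double covers of $\P^2$ branched over a sextic, quartics in $\P^3$, $(2,3)$ complete intersections in $\P^4$, and $(2,2,2)$ complete intersections in $\P^5$. In each case the parameter space is an open subset $U$ of affine space over $\Z$, with a dominant map to $\KK_d$, and it suffices to produce a Zariski dense set of $\Q$-points of $U$ with geometric Picard rank $1$.

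The key input is the specialization inequality. If $S/\Q$ has good reduction at $p$, then $\rho(S_{\Qbar})\le\rho(S_{\Fpbar})$, and the specialization map $\Pic(S_{\Qbar})\hookrightarrow\Pic(S_{\Fpbar})$ embeds the lattice primitively up to finite index, so discriminants agree up to squares. For a K3 over a finite field, Tate's conjecture makes $\rho(S_{\Fpbar})$ even. One computes the characteristic polynomial of Frobenius on $H^2_{\et}$ by point counting over $\F_{p^k}$ for small $k$, using the functional equation. The number of roots of the form $p\cdot(\text{root of unity})$ gives an upper bound for $\rho$, which is sharp by Tate. The goal is to find, for each $d$, a surface $S_p$ over $\F_p$ and a surface $S_q$ over $\F_q$ (for primes $p\ne q$), each of geometric Picard rank $2$, whose Picard discriminants lie in different square classes. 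The discriminant is read off from the Artin--Tate formula or from explicit curves on the surface. If $S/\Q$ reduces to $S_p$ and to $S_q$, then $\rho(S_{\Qbar})\le 2$. Rank $2$ would force both discriminants to equal $\disc\Pic(S_{\Qbar})$ up to squares, which contradicts the choice, so $\rho(S_{\Qbar})=1$.

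Density then follows from the Chinese remainder theorem and weak approximation. The set of $\Q$-points of $U$ that reduce to the coefficients of $S_p$ modulo $p$ and to those of $S_q$ modulo $q$ is a product of $p$-adic and $q$-adic open sets intersected with $U(\Q)$. Such a set is Zariski dense in affine space, so its image is dense in $\KK_d$.

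The main obstacle is the search step, which is hardest for large $d$. For $d=2,4$ the point counts are cheap, and the references in the introduction already supply examples. For $d=8$, the point counts on the net of quadrics in $\P^5$ are expensive. My plan there is to use Mukai's Hodge isogeny: it relates the degree-8 K3 to a degree-2 K3, namely the double plane branched over the discriminant sextic of the net. Frobenius traces can then be computed on the double plane. Since Picard ranks are preserved and discriminants are related by an explicit factor, the rank-$2$ examples transfer. For $d=6$ there is no such shortcut, and I expect this to be the genuine difficulty. I would first try a direct count over $\F_2$ and $\F_3$, restricted to a $(2,3)$ complete intersection containing a conic or line. That restriction exhibits an explicit rank-$2$ sublattice, whose discriminant can be compared at the two primes. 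If the needed Frobenius traces over $\F_{p^k}$ for $k$ up to about $10$ are too costly to compute, I would use the conic-bundle or fibration structure to cut down the enumeration.
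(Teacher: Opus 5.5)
\paragraph{A correct proof by a different route.} Your proposal works, but it differs from the paper's argument. You run van Luijk's two-prime argument. You take reductions at primes $p\neq q$, each of geometric Picard rank $2$, whose Picard discriminants lie in different square classes. Then \emph{every} integral lift with those two reductions has $\rho=1$. Density is then just density of a congruence class in coefficient space, intersected with the smooth locus.

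For $d=6$ this means pairing a line at one prime with a conic at the other. The lattice $\langle H,L\rangle$ has discriminant $-13$ and $\langle H,C\rangle$ has discriminant $-16$. Two line examples would not suffice, since both give the square class of $-13$.

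The paper instead works at a single prime $p>2$. It uses Lemma~\ref{lemma:cokerneltorsionfree}: the cokernel of $\overline{\spe}$ is torsion free. So if $\rho(\overline{X})=2$, then $\overline{\spe}$ is surjective, and the line on $\overline{X}_p$ lifts to a line on $\overline{X}$ (Theorem~\ref{thm:genericdeg6}). Hence any lift avoiding the divisor $\KK_{d,1}^{-2}$ has rank $1$. Density then comes from Lemma~\ref{lem:unirational}, intersected with that open set.

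\paragraph{What each approach buys.} Your version does not need the torsion-freeness lemma, so $p=2$ is allowed. It also does not need to certify that the lift contains no lines over $\Qbar$, so no Gr\"obner basis computation is required. Its density step needs no Noether--Lefschetz open condition beyond smoothness.

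The cost is a second search and a second Weil-polynomial computation in a different configuration. For a conic $C$, you would count points using the elliptic pencil $|H-C|$. The paper avoids this: it needs only the line configuration, where projection from the line gives a double-plane model (Theorem~\ref{thm:dominant}) to which Magma's fast algorithm applies.

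\paragraph{The $d=8$ step needs repair.} Mukai's isogeny is a Hodge-theoretic statement over $\C$. On its own it does not transfer Picard ranks or discriminant square classes from $\overline{Y}_p$ to $\overline{X}_p$ over $\Fpbar$, which is what ``the rank-$2$ examples transfer'' asserts.

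The fix is to run your two-prime argument on the discriminant double cover $Y/\Q$ itself. At odd primes, the reductions of $Y$ are the discriminant covers of the reductions of $X$. This gives $\rho(\overline{Y})=1$. Only then invoke Proposition~\ref{lemma:tfmpartnerpreservespic} in characteristic $0$; this is exactly how the paper uses the isogeny in Proposition~\ref{thm:disc_rank1}.

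Finally, your argument does not use primitivity of the specialization image. You only need that the image has finite index when the two ranks agree.
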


Van Luijk \cite{van_luijk_k3_2007} proved this in degree 4 by a pioneering method of leveraging the Weil and Tate conjectures, together with properties of the specialization homomorphism for the Picard group, to construct specific quartic K3 surfaces over $\F_{p}$ and $\F_{p'}$ with geometric Picard rank 2 and incompatible Picard lattices, forcing a common lift to $\Q$ to have geometric Picard rank 1. Kloosterman~\cite{kloosterman} showed that in van Luijk's method, the specific K3 surface modulo the second prime $p'$ could be traded for information coming from the Artin--Tate formula.  Elsenhans and Jahnel~\cite{elsenhans_jahnel_2008,elsenhans_jahnel:which} adapted van Luijk's strategy for K3 surface of degree 2, and then developed a new general technique~\cite{elsenhans_picard_2011} for K3 surfaces (applying it in degree 2) that only required working modulo a single prime, see also \cite[Proposition~5.3]{hassett-varilly}.  These authors were able to present the first explicit examples of Noether--Lefschetz general K3 surfaces over $\Q$; the case of quartic K3s had been a long-standing challenge due to Mumford, with van Luijk's work yielding the first known explicit examples, see \cite[\S~2.4-2.6]{varilly:K3}.  As already utilized in \cite[p.~12]{van_luijk_k3_2007}, the nature of the explicit constructions, by lifting specific K3 surfaces over a finite field, yields the Zariski density in  Theorem~\ref{thm:dense} for $d=4$, and a similar argument using the construction of Elsenhans and Jahnel~\cite{elsenhans_jahnel_2008,elsenhans_jahnel:which} works for $d=2$, see Section~\ref{sec:density}.  For $d=8$, an argument involving Mukai's degree 8 to degree 2 isogeny (see Section~\ref{sec:8}), together with examples first due to Elsenhans and Jahnel~\cite[\S~8]{elsenhans_picard_2011}, yields the result.  Finally, we must further develop the techniques of van Luijk and Elsenhans--Jahnel for the application to K3 surfaces of degree 6.

A K3 surface of degree 6 is a complete intersection $X=X_{2, 3}\subset \P^4$. We specifically consider such $X$ whose reduction $X_p$ over $\Fp$ contains a line, and then projection from the line yields a double cover $X_p \to \P_{\Fp}^2$.  Generically, this degree 2 model does not contain a tritangent line (see Remark~\ref{rem:tangent_cubic}) so we cannot use the techniques of \cite{elsenhans_picard_2011} directly. 
Instead, we prove a generalization of \cite[Proposition~5.3]{hassett-varilly} to lines contained in projective K3 surfaces, see Section~\ref{sec:6}. After checking $\rho(X_p)=2$, we lift to a surface $X \subset \P^4$ over $\Q$ that does not contain any lines by a verification using Gr\"obner bases.
The first explicit examples were presented by the second author in \cite{spring-poster}.

In degree 8, we use the fact that a complete intersection $X \subset \P^5$ of three quadrics over $\Q$ is Hodge-isogenous to a degree 2 discriminant K3 surface $Y$ over $\Q$, see Section~\ref{sec:8}.  When the reduction $Y_p$ over $\Fp$ admits a tritangent line and has geometric Picard rank two, yet $Y$ does not admit a tritangent line over $\overline{\Q}$, then $Y$ (and hence $X$) will have geometric Picard rank one.  Explicit examples in degree 8 were (implicitly) constructed using this technique in \cite[\S~8]{elsenhans_picard_2011} and \cite[\S5]{mckinnie_brauer_2017}. 

Finally, in Section~\ref{sec:density}, we show how the existence of
explicit constructions of K3 surfaces of geometric Picard rank 1 over
$\Q$ (such as those appearing in work of van Luijk, Elsenhans--Jahnel,
and here) can be used to prove the Zariski density of
the locus of Noether--Lefschetz general K3 surfaces over $\Q$ in the
moduli space.

It remains open whether there exist Noether--Lefschetz general K3 surfaces over $\Q$ of any even degree greater than 8 and in which degrees $d$ is the locus of such surfaces Zariski dense in the moduli space $\KK_d$. 

\smallskip

\noindent \textbf{Acknowledgments.} The authors would like to thank Nick Addington for the idea of projecting from a line in the degree 6 case.  We also thank Jean-Louis Colliot-Th\'el\`ene and Bjorn Poonen for discussions concerning Zariski density of rational points as well as Sarah Frei and Salim Tayou for discussions about the specialization homomorphism.  Peter Doyle and his friend Steve also provided insightful conversations. Auel was partially supported by NSF grant DMS-2200845, and Scheible by an Undergraduate Research Assistantship at Dartmouth (URAD) and a William H.\ Neukom Institute for Computational Science at Dartmouth College research scholarship.

\section{Prerequisites}
\label{sec:prereq}

A \emph{K3 surface} is a smooth, projective, geometrically integral surface $X$ over a field~$k$ whose canonical bundle $\omega_X$ on $X$ is trivial and $H^1(X, \OO_X) = 0$. A \emph{polarized K3 surface} is a pair $(X, H)$ where $X$ is a K3 surface and $H\in \Pic(X)$ is an ample divisor class. We say that the polarization is of \emph{degree} $d$ if $H^2=d$. For each even $d \geq 2$, the moduli space $\KK_d$ of primitively polarized K3 surfaces of degree $d$ is a 19-dimensional quasiprojective variety. For integers $r,e$, we define the \emph{Noether-Lefschetz divisor} $\KK_{d, e}^r \subset \KK_d$ to be the locus of polarized K3 surfaces $(X, H)$ of degree $d$ such that $\NS(X)$ contains a primitive rank 2 sublattice spanned by $H$ and $L$ with Gram matrix
\[
\begin{array}{c|cc}
X & H & L \\\hline
H & d & e \\
L & e & r
\end{array}
\]
When $r$ is even and $dr-e^2<0$ then $\KK_{d, e}^r \subset \KK_d$ is a nonempty irreducible divisor by \cite{ogrady:thesis}, cf.\ \cite{dolgachev:K3}, \cite{greer_li_tian}, \cite{alexeev_engel}.

A subvariety $X\subseteq \P^n$ is a \emph{complete intersection} if the ideal of $X$ is generated by exactly $\codim X$ elements.  
The adjunction formula and the Lefschetz hyperplane theorem imply that the only complete intersection K3 surfaces are the quartic in $\P^3$, the intersection of a quadric and a cubic in $\P^4$, and the intersection of three quadrics in $\P^5$, yielding primitive polarizations of degrees $d=4,6,8$ respectively.

The \emph{Picard rank} of $X$ is the rank $\rho(X) = \rk_\Z \NS(X)$ of the N\'eron--Severi group $\NS(X)$ and the \emph{geometric Picard rank} of $X$ is $\rk_\Z \NS(\overline{X})$, where $\overline{X} = X \times_{\Spec k} \Spec \overline{k}$ for a choice of algebraic closure $\overline{k}$ of $k$. 
For a K3 surface $X$ over $k$, we have that $\Pic (X)\cong \NS (X)\cong \Z^{\rho(X)}$ is an even lattice of signature $(1,\rho(X)-1)$ with respect to the intersection product, with $1\leq \rho(X)\leq 22$, see \cite[Section~17.2]{huybrechts_lectures_2016}. 

Now, let $X$ be a K3 surface over $\Q$ and $p$ be a prime. If there exists a proper flat model of $X$ over $\Z$ with a smooth fiber $X_p$ over $\F_p$, we say that $p$ is a prime of \emph{good reduction} for $X$.  We write $\overline{X}_p = X_p \times_{\Spec \F_p} \Spec \overline{\F}_p$.
For a prime $p$ of good reduction, there exist \emph{specialization homomorphisms}
\[\spe: \NS(X)\to \NS(X_p) \quad \text{and} \quad \overline{\spe}: \NS(\overline{X})\to \NS(\overline{X}_p)\]
which are injective and respect the intersection product, cf.\ \cite[\S20.3]{fulton}, \cite[Proposition~6.2]{van_luijk_elliptic_2007}, \cite[\S3.2]{maulik-poonen}. Moreover, we have the following.

\begin{lemma}[Elsenhans and Jahnel~{\cite[Corollary 3.7]{elsenhans_picard_2011}}]
\label{lemma:cokerneltorsionfree}
Let $X$ be a proper scheme over $\Q$ and $p > 2$ be a prime of good reduction. Then the cokernel of the specialization homomorphism $\overline{\text{sp}}: \NS(\overline{X})\to \NS(\overline{X}_p)$ is torsion free.
\end{lemma}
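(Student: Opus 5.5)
The plan is to identify the torsion of the cokernel with a subgroup of a torsion-free group built from $\ell$-adic cohomology, one prime $\ell$ at a time, treating $\ell \neq p$ and $\ell = p$ separately.

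Fix a prime $\ell$ and an $\ell$-torsion element of the cokernel. Concretely, take $D \in \NS(\overline{X}_p)$ that is not in the image of $\overline{\spe}$ but with $\ell D = \overline{\spe}(E)$ for some $E \in \NS(\overline{X})$. We must show that $E$ is itself divisible by $\ell$ in $\NS(\overline{X})$. Then $E = \ell E'$, and injectivity of specialization on N\'eron--Severi modulo torsion lets us compare $D$ with $\overline{\spe}(E')$. In this setting of a proper scheme, some care is needed with torsion classes, but we may work with $\NS$ modulo numerical torsion, or note that specialization is compatible with Kummer sequences on the torsion part as well.

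\emph{Case $\ell \neq p$.} Use the Kummer sequence on $\overline{X}$ and $\overline{X}_p$:
\[
0 \to \NS(\overline{X})\otimes \Z/\ell \to H^2_{\et}(\overline{X},\mu_\ell) \to \Br(\overline{X})[\ell]\to 0,
\]
and similarly over $\overline{\F}_p$. By smooth and proper base change, there is a specialization isomorphism $H^2_{\et}(\overline{X},\mu_\ell)\isom H^2_{\et}(\overline{X}_p,\mu_\ell)$, compatible with the cycle class maps and with $\overline{\spe}$. The class of $E$ in $\NS(\overline{X})/\ell$ maps to the class of $\ell D$, which is zero in $\NS(\overline{X}_p)/\ell$, hence to zero in $H^2(\overline{X}_p,\mu_\ell)$. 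By the isomorphism and the injectivity of the Kummer map on $\NS\otimes\Z/\ell$, it follows that $E \in \ell\,\NS(\overline{X})$.

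\emph{Case $\ell = p$.} This is the main obstacle, and it is where the hypothesis $p>2$ enters. Here we replace \'etale cohomology with crystalline or flat cohomology, namely the flat Kummer sequence for $\mu_p$ together with the crystalline Chern class $c_1 \colon \NS(\overline{X}_p)\otimes \Z_p \to H^2_{\mathrm{crys}}(\overline{X}_p/W)$. The first thing to try is to lift to the Witt vectors $W(\overline{\F}_p)$, which are unramified with absolute ramification index $e=1<p-1$ since $p>2. For such bases, Fontaine--Messing and Berthelot--Ogus comparisons show that the image of $\NS(\overline{X})$ in $H^2_{\mathrm{dR}}$ of the lift, which is identified with $H^2_{\mathrm{crys}}(\overline{X}_p/W)$, is saturated compatibly with $c_1$. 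Equivalently, the $p$-divisibility of $E$ can be detected in $H^2_{\mathrm{crys}}\otimes \F_p = H^2_{\mathrm{dR}}(\overline{X}_p)$. The argument then runs as in the first case: $c_1(\overline{\spe}(E)) = p\,c_1(D)$ vanishes mod $p$, so $c_1(E)$ vanishes in the de Rham cohomology of the lift mod $p$. By torsion-freeness and the Hodge filtration behaving well (the condition $e<p-1$ makes divided powers on $pW$ topologically nilpotent), $E$ is divisible by $p$. The delicate point is justifying that the de Rham cycle class on the generic fiber is injective mod $p$ on $\NS\otimes\Z/p$. Since this is precisely the content of Elsenhans--Jahnel's Corollary~3.7, which relies on Raynaud-type results for $p > e+1$, in the paper I would cite this step rather than reprove it.
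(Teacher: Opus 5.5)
The paper does not prove this lemma; it quotes it from Elsenhans--Jahnel, so there is no route to compare with. Judged on its own, your proposal has a genuine gap exactly where the lemma has content.

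Your prime-to-$p$ argument is correct. For a K3 surface $\NS(\Xbar_p)$ is torsion free, so $\ell(D-\overline{\spe}(E'))=0$ finishes. But that argument works verbatim for $p=2$ too, so the whole content of the statement, and the only place $p>2$ enters, is the $p$-primary part. There your sketch reduces the claim to itself. Let $\mathscr X$ be the smooth model over $W=W(\Fpbar)$. The step you defer to the citation is that $E\in\NS$ with $c_1^{\mathrm{dR}}(E)\in p\,H^2_{\mathrm{dR}}(\mathscr X/W)$ lies in $p\NS$. This is equivalent to the $p$-part of the lemma. Modulo $p$, the de Rham class map factors as $\NS(\Xbar)/p\to\NS(\Xbar_p)/p\to H^2_{\mathrm{dR}}(\Xbar_p/\Fpbar)$, and the second map is injective for K3 surfaces. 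The snake lemma, applied to multiplication by $p$ on $0\to\NS(\Xbar)\to\NS(\Xbar_p)\to\operatorname{coker}(\overline{\spe})\to 0$, identifies the kernel of the first map with the $p$-torsion of the cokernel. So citing Corollary~3.7 at that point is circular. Likewise, your assertion that the image of $\NS$ in $H^2_{\mathrm{dR}}$ is ``saturated'' is the same unproved claim.

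Two further ingredients are missing.

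First, classes in $\NS(\Xbar)$ are in general only defined over finite extensions of $\Q_p$ of arbitrary ramification, where the integral crystalline comparison you invoke is not available. Before $e=1<p-1$ can be used, you must show that $\Pic(X_{K_0})\to\NS(X_{\overline{K}_0})\isom\NS(\Xbar)$ is surjective for $K_0=W[1/p]$. For a K3 surface this holds because inertia acts trivially on $H^2_{\et}(\Xbar,\Q_\ell(1))$ by good reduction, hence on the torsion-free group $\NS(\Xbar)$. Also $\Br(K_0)=0$, since $K_0$ is $C_1$ by Lang's theorem.

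Second, rather than trying to show $E\in p\NS$, show that $D$ itself lifts.
\begin{itemize}
\item Extend $E$ to $\mathscr E$ on $\mathscr X$. Then $c_1^{\mathrm{dR}}(\mathscr E)\in\mathrm{Fil}^1H^2_{\mathrm{dR}}(\mathscr X/W)$ corresponds to $p\,c_1^{\mathrm{cris}}(D)$ under $H^2_{\mathrm{dR}}(\mathscr X/W)\isom H^2_{\mathrm{cris}}(\Xbar_p/W)$.
\item Since $H^2_{\mathrm{dR}}/\mathrm{Fil}^1\isom H^2(\mathscr X,\OO_{\mathscr X})$ is torsion free, $c_1^{\mathrm{cris}}(D)\in\mathrm{Fil}^1$.
\item The Berthelot--Ogus lifting criterion for line bundles then lifts $D$ compatibly over each $W_n$. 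This is where $p>2$ is used: the divided powers on $pW$ must be nilpotent.
\item Grothendieck existence algebraizes the lift to $\mathscr X$, so $D\in\operatorname{im}(\overline{\spe})$.
\end{itemize}

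This repair is specific to K3 surfaces. For an arbitrary proper $X$ as in the statement, $\NS$ and crystalline cohomology may have $p$-torsion, and neither your final step nor the de Rham argument goes through.
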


For a variety $X$ over $\F_p$, we have the \emph{absolute Frobenius} morphism $F: X\to X$ which is the identity on the topological space of $X$ and acts by $u\mapsto u^p$ on the structure sheaf. Let $F^*$ be its pullback on $H^2_{\et}(\overline{X}_p, \Q_\ell)$. We call the characteristic polynomial of $F^*$ the \emph{Weil polynomial} of $X_p$. By the Weil conjectures, it is a polynomial of degree 22 whose roots are algebraic integers with absolute value $p$.

For a K3 surface $X_p$ over $\F_p$, the cycle class map $\NS(\Xbar_p) \otimes_{\Z} \Q_\ell \to H^2_{\et}(\overline{X}_p, \Q_\ell(1))$ is a Frobenius equivariant injection, cf.\ \cite{maulik-poonen}, \cite[Proposition~6.2]{van_luijk_elliptic_2007}.
By observing that each divisor class over $\Fpbar$ is defined over some finite extension $\F_{p^m}$ of $\Fp$, each divisor of $\overline{X}_p$ is fixed by $F^m$ for some positive integer $m$. Thus, if we let $F^*(1)$ be the induced map (of $\Q_\ell$-vector spaces) on $H^2_{\et}(\overline{X}_p, \Q_\ell(1))$, under the cycle class map, each divisor class becomes an eigenvector of $F^*(1)$ with eigenvalue a root of unity. Thus, by the injectivity of the cycle class map, the $\Q_\ell$-dimension of $\NS(\overline{X}_p) \otimes_{\Z_\ell} \Q_\ell$, and hence the $\Z$-rank of $\NS(\Xbar_p)$, is bounded above by the number of eigenvalues of $F^*(1)$ that are roots of unity, counted with multiplicity. Eigenvalues of $F^*(1)$ differ from eigenvalues of $F^*$ by a factor of $p$, so the geometric Picard rank $\rho(\Xbar)$ is bounded above by the number of eigenvalues $\lambda$ of $F^*$ for which $\lambda/p$ is a root of unity, counted with multiplicity, cf.\ \cite[Corollary~2.3]{van_luijk_k3_2007}. In fact, a consequence of the Tate conjecture for K3 surfaces, proved by Nygaard--Ogus~\cite{nygaard-ogus}, Maulik~\cite{maulik:tate}, Charles~\cite{charles2013,charles2016},
Madapusi Pera~\cite{madapusipera:odd}, Kim--Madapusi Pera~\cite{kim_madapusipera,kim_madapusipera:erratum}, and Ito--Ito--Koshikawa~\cite{ito_ito_koshikawa}, is that the upper bound is tight.

\begin{theorem}
    \label{lemma:weilbound}
    For a K3 surface $X$ defined over $\F_q$, the geometric Picard rank $\rho(\Xbar)$ is equal to the number of eigenvalues $\lambda$ of $F^*$ for which $\lambda/q$ is a root of unity, counted with multiplicity.
\end{theorem}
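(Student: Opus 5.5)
The upper bound $\rho(\Xbar) \leq \#\{\lambda : \lambda/q \text{ a root of unity}\}$ was already established in the discussion preceding the statement. The cycle class map is a Frobenius-equivariant injection $\NS(\Xbar)\otimes\Q_\ell \hookrightarrow H^2_{\et}(\Xbar,\Q_\ell(1))$, and every divisor class is fixed by some power of $F^*(1)$. So the plan is to prove only the reverse inequality, and to get it from the Tate conjecture for K3 surfaces over finite fields, which I take as established by the works cited above.

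\textbf{Step 1: pass to a finite extension.} Let $V = H^2_{\et}(\Xbar,\Q_\ell(1))$ and $\Phi = F^*(1)$. Only finitely many eigenvalues of $\Phi$ are roots of unity, so I choose $m$ with $\zeta^m = 1$ for all of them. Over $\F_{q^m}$ the Frobenius is $\Phi^m$, and every eigenvalue $\lambda/q$ of $\Phi$ that is a root of unity becomes the eigenvalue $1$ of $\Phi^m$. Since the count of such eigenvalues with multiplicity is unchanged, we may replace $q$ by $q^m$. After this, the claim is that $\rho(\Xbar)$ equals the algebraic multiplicity of the eigenvalue $1$ of $\Phi$.

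\textbf{Step 2: apply Tate.} The Tate conjecture for the K3 surface $X_{\F_{q^m}}$ says that the cycle class map $\NS(X_{\F_{q^{mk}}})\otimes\Q_\ell \to V^{\Phi^{k}=1}$ is surjective for every $k$. Taking the union over all $k$ shows that $\NS(\Xbar)\otimes\Q_\ell$ equals the generalized $1$-eigenspace spanned by the fixed vectors, so its dimension is the \emph{geometric} multiplicity of the eigenvalue $1$ of $\Phi$.

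\textbf{Step 3: semisimplicity.} The main obstacle is that the statement counts eigenvalues with algebraic multiplicity, while Tate as in Step 2 only yields the geometric multiplicity. I would close this gap by invoking semisimplicity of Frobenius on the $1$-eigenspace. For K3 surfaces this is part of the package: Tate's theorem that the Tate conjecture implies the equivalent ``strong'' form, where the order of pole of the zeta function equals the rank, uses the nondegenerate cup product pairing on $V$ restricted to algebraic classes. Concretely, the intersection form on $\NS(\Xbar)\otimes\Q_\ell$ is nondegenerate by the Hodge index theorem. This gives a $\Phi$-stable splitting
\[
V = (\NS(\Xbar)\otimes\Q_\ell)\oplus (\NS(\Xbar)\otimes\Q_\ell)^{\perp}.
\]
On the first summand $\Phi$ acts with finite order. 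On the complement, $\Phi$ has no fixed vector, because any fixed vector would be algebraic by Step 2 and hence lie in the first summand, contradicting the splitting. The complement is also stable under all powers of $\Phi$, so the same argument rules out $\Phi^k$-fixed vectors for every $k$. An eigenvalue that is a root of unity on the complement would produce such a fixed vector, so the characteristic polynomial of $\Phi$ restricted to the complement has no root-of-unity roots. The root-of-unity eigenvalues, with multiplicity, are therefore exactly those on the first summand, and there are $\rho(\Xbar)$ of them. Undoing the twist by $q$ gives the statement.
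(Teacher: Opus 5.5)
Your proposal is correct and follows the paper's route. The upper bound comes from the Frobenius-equivariant cycle class map, and equality comes from the Tate conjecture for K3 surfaces, which the paper simply cites. Your Step 3 is Tate's argument: nondegeneracy of the intersection form on $\NS(\Xbar)\otimes\Q_\ell$ gives a $\Phi$-stable orthogonal splitting, so the eigenvalue $1$ is semisimple. This makes explicit the passage from geometric to algebraic multiplicity that the paper leaves inside the phrase ``a consequence of the Tate conjecture.'' One correction of wording: in Step 2, $\bigcup_k V^{\Phi^k=1}$ is the eigenspace $V^{\Phi=1}$, not the generalized eigenspace, and the equality of those two spaces is exactly what Step 3 proves.
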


There are algorithms to compute the Weil polynomial of a K3 surface over $\F_p$ given a degree 2 model (due to Elsenhans and Jahnel~\cite{elsenhans_picard_2011} and implemented in \texttt{Magma}~\cite{magma}) or given a model as a hypersurface in toric varieties (due to Kedlaya~\cite{kedlaya}, Abbott--Kedlaya--Roe~\cite{AKR}, and Costa--Harvey--Kedlaya~\cite{CHK}).  Over small finite fields, it can be faster to use specialized point counting techniques, whereas over medium-sized finite fields, $p$-adic methods are usually faster.

\section{Degree 8 Case}
\label{sec:8}

A general polarized K3 surface $(X,H)$ of degree 8 is a complete intersection of three quadrics in $\P^5$.  In this case, the space of quadrics $\P^2 = \P(H^0(\P^5,\II_X(2)))$ containing $X$, called the net of quadrics associated to $X$, is spanned by the three quadrics. The locus $C\subseteq \P^2$ of degenerate quadrics in the net is a sextic curve. Let $f : Y \to \P^2$ be the discriminant double cover of the net, which is branched over $C$.  The discriminant cover can be defined either in terms of the center of the even Clifford algebra, cf.\ \cite[\S3]{kuznetsov:quadrics}, \cite[\S1.6]{ABB:quadrics}, or the Stein factorization of the relative moduli space of maximal isotropic subspaces, cf.\ \cite[\S3]{hassett_varilly_varilly}, \cite[\S3.2]{mckinnie_brauer_2017}, associated to the net.  
These two perspectives are equivalent by \cite[Appendix~B]{ABB:quadrics}. 
When $C$ is smooth, then $Y$ is smooth and $(Y, f^*\OO_{\P^2}(1))$ is a polarized K3 surface of degree 2, called the \emph{discriminant} K3 surface associated to~$X$.  

Explicitly, over a field $k$ of characteristic $\neq 2$, if $X = V(q_0,q_1,q_2) \subset \P^5 = \P(V)$ for quadratic forms $q_0,q_1,q_2$ on $V$, then we consider the \emph{linear span} quadratic form $q = x_0q_0 + x_1q_1 + x_2q_2$ on $V \otimes_k k[x_0,x_1,x_2]$, see \cite[Definition~1.2.3]{ABB:quadrics}.  The signed discriminant $\disc(q)=-\det(x_0Q_0+x_1Q_1+x_2Q_2) \in k[x_0,x_1,x_2]$, with $Q_i$ the symmetric Gram matrix associated to the quadratic form $q_i$, is then a homogeneous form of degree 6.  The discriminant K3 surface $Y \subset \P(1,1,1,3)$ is the variety defined by $y^2 = \disc(q)$ in weighted homogeneous coordinates $(x_0:x_1:x_2:y)$, with the double cover $Y \to \P^2$ given by projection away from the last coordinate, see \cite[\S5]{mckinnie_brauer_2017}.

\begin{proposition}
\label{prop:discriminantcontinuous}
The formation of the discriminant K3 surface 
determines a dominant rational map $\Phi: \KK_8\dashrightarrow \KK_2$. 
\end{proposition}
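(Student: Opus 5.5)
We need to show two things: that $\Phi$ is a well-defined rational map, and that it is dominant. The plan is to work on the parameter space of nets and descend to moduli.

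\textbf{Rational map.} Let $U \subset \Gr(3, \mathrm{Sym}^2 V^\vee)$ be the open locus of nets $\langle q_0,q_1,q_2\rangle$ whose base locus $X$ is a smooth complete intersection K3 surface of degree 8 and whose discriminant sextic $C$ is smooth. Both conditions are open. $U$ is nonempty: the net $q_i=\sum_j a_{ij} x_j^2$ for general diagonal coefficients gives smooth $X$, and $\disc(q)=-\prod_j \ell_j(x)$ is then a product of six lines. So we should instead argue nonemptiness via a general net, or via the known fact that a general net has smooth discriminant (e.g.\ the classical description by Beauville of nets with smooth discriminant, or an explicit example checked by computer). Over $U$ the construction $y^2=\disc(q)$ gives a flat family of degree 2 K3 surfaces in $\P(1,1,1,3)$, hence a morphism $U \to \KK_2$. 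Changing the basis of the net by $g\in GL_3$ changes $\disc$ by a linear substitution and a scalar, which is absorbed by rescaling $y$ (up to the sign convention, harmless over $\bar k$). Changing coordinates on $\P^5$ by $h\in GL_6$ multiplies $\disc$ by $\det(h)^2$. So the morphism is $GL_6\times GL_3$-invariant. Since $U$ dominates an open subset of $\KK_8$ (general degree 8 K3 surfaces are such complete intersections) and $\KK_8$ is a coarse quotient of this parameter space, the map descends to a rational map $\Phi:\KK_8 \dashrightarrow \KK_2$. Concretely, one takes the open subset of $\KK_8$ whose points are complete intersections with smooth discriminant, and uses the universal property of the geometric quotient.

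\textbf{Dominance.} Both spaces are irreducible of dimension 19, so it suffices to show that $\Phi$ has a zero-dimensional generic fiber, or more directly to construct a preimage of a general point. I would use the classical converse construction. Given a general degree 2 K3 surface $Y\to\P^2$ branched over a smooth sextic $C$, a net of quadrics in $\P^5$ with discriminant $C$ corresponds to an expression of the sextic as the determinant of a symmetric $6\times6$ matrix of linear forms. Such expressions correspond to ineffective theta characteristics, or, in Mukai's formulation, to a class $\alpha\in\Br(Y)[2]$ (equivalently, a choice of even Clifford data). Every smooth plane sextic admits such a symmetric determinantal representation (Dixon/Beauville), since ineffective even theta characteristics exist on a general sextic. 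The base locus of the resulting net is a degree 8 K3 surface $X$ with $\Phi(X)=Y$, so $\Phi$ is dominant.

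\textbf{Alternative for dominance.} A dimension count would also work. The fibers are finite, because for a fixed $C$ the symmetric determinantal representations up to equivalence are finite, in bijection with suitable theta characteristics. Then $\overline{\Phi(\KK_8)}$ is 19-dimensional inside the irreducible 19-dimensional $\KK_2$, hence everything.

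\textbf{Main obstacle.} The main obstacle is the nonemptiness of the smooth-discriminant locus together with the surjectivity onto general sextics. I would handle both at once by citing Beauville's theorem on symmetric determinantal representations of smooth plane curves, which realizes a general smooth sextic as a discriminant.
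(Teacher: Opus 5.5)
Your argument is correct. For dominance you rely on the same input as the paper, which simply cites Beauville's Proposition~6.23; you make it explicit through symmetric determinantal representations of a general smooth sextic coming from a non-effective even theta characteristic. Your finite-fibre dimension count is a valid alternative, needing only nonemptiness, finiteness of such representations, and finiteness of the projective automorphisms of $C$.

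The genuine difference is in the well-definedness step. The paper pins down the domain of $\Phi$ as the complement of finitely many Noether--Lefschetz divisors. By Saint-Donat, very ampleness fails only on $\KK_{8,1}^0\cup\KK_{8,2}^0$ and the complete intersection property fails only on $\KK_{8,3}^0$. By Beauville, for smooth $X$ the discriminant $Y$ acquires at most nodes, exactly along $\KK_{8,1}^{-2}\cup\KK_{8,2}^{-2}$. That explicit description is what Section~\ref{sec:density} uses to specify the open set $U\subset\KK_8$. You instead work on the open set of nets in $\Gr(3,\mathrm{Sym}^2V^\vee)$, check invariance under $GL_6\times GL_3$, and descend through the quotient. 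This is more self-contained and suffices for the statement as posed. However, it produces only some unspecified nonempty open set and says nothing about which surfaces lie in it.

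Two things to tidy. First, drop the diagonal-net false start; nonemptiness then follows from Dixon's theorem, or from the explicit example in Section~\ref{sec:8}.

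Second, in the dominance step you should say why the net built from a determinantal representation of a smooth $C$ lies in your open set, i.e.\ why its base locus $X$ is a smooth complete intersection surface. A singular point $p$ of $X$, or any point of a component of dimension $\geq 3$, is a point where $dq_0,dq_1,dq_2$ are dependent. That means $A(a)p=0$ for some $a\in\P^2$, with $A=\sum x_iQ_i$. There are then two cases:
\begin{itemize}
\item if $\operatorname{corank}A(a)\geq 2$, then $\adj A(a)=0$;
\item otherwise $\adj A(a)=c\,pp^t$, and Jacobi's formula gives $\partial_{x_i}\det A(a)=\tr(\adj A(a)\,Q_i)=c\,p^tQ_ip=0$, since $p\in X$.
\end{itemize}
Either way $C$ is singular at $a$, a contradiction.
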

\begin{proof}
The only question is whether the above explicit construction is defined on an open in $\KK_8$.  The construction is well defined on the locus in $\KK_8$ where the polarization $H$ embeds $X$ as a smooth complete intersection of 3 quadrics whose discriminant K3 is smooth.  This locus is the complement of finitely many Noether--Lefschetz divisors.  Indeed, the failure of smoothness occurs when the polarization fails to be very ample, which occurs on the unigonal locus $\KK_{8, 1}^{0}$ or the hyperelliptic locus $\KK_{8, 2}^0$, see \cite[\S5,~\S8]{saint-donat}; further, the failure of being a complete intersection occurs on the trigonal locus $\KK_{8, 3}^0$, see \cite[Theorem~7.2]{saint-donat}; when $X$ is smooth, the discriminant K3 $Y$ can acquire at most ordinary double points, see \cite[\S6.2]{beauville:prym}, and this occurs on $\KK_{8, 1}^{-2}$ (when $X$ contains a line) or $\KK_{8, 2}^{-2}$ (when $X$ contains a conic). Finally, the rational map is dominant by \cite[Proposition~6.23]{beauville:prym}.
\end{proof}
  
In particular, there exist Zariski open sets $\KK_8^\circ \subset
\KK_8$ and $\KK_2^\circ \subset \KK_2$ such that $\Phi: \KK_8^\circ\to \KK_2^\circ$ is a surjective morphism. 

When a complete intersection K3 surface $X \subset \P^5$ of degree 8 is defined over $\Q$, the explicit construction of the discriminant K3 shows that $Y \to \P^2$ is also defined over~$\Q$.  The following result goes back to Mukai's pioneering work~\cite{mukai:invent,mukai:bombay,mukai:sugaku} on isogenies between K3 surfaces.

\begin{proposition}
\label{lemma:tfmpartnerpreservespic}
For a smooth complete intersection $X \subset \P^5$ of three quadrics over $\Q$ whose associated discriminant $Y \to \P^2$  is smooth, we have that $\rho(\Xbar) = \rho(\overline{Y})$.
\end{proposition}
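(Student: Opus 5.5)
We will prove the equality over $\C$, via a fixed embedding $\Qbar \subset \C$; this suffices because the geometric Picard rank of a K3 surface is invariant under extension of algebraically closed fields of characteristic zero. The plan is to realize $X_\C$ as a fine (or twisted) moduli space of sheaves on $Y_\C$, following Mukai, and to obtain a Hodge isometry between the transcendental lattices up to finite index. The Picard ranks then agree because $\rho = 22 - \rk T$.

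The first step is to recall the geometry of the net. For a point $t\in Y$ lying over $x \in \P^2$ with $x\notin C$, the smooth quadric $Q_x \subset \P^5$ carries two families of isotropic planes, and $t$ selects one of them. Smoothness of $Y$ means $C$ is smooth, so every degenerate quadric in the net has corank exactly one and its planes form a single family. This gives the relative moduli space $\mathcal{M} \to Y$ of maximal isotropic subspaces used in Section~\ref{sec:8}. Mukai's construction then proceeds as follows. Each plane $\Lambda$ in $Q_x$ meets $X$ in a finite scheme, and the associated spinor bundles restrict to stable rank two bundles on $X$ with Mukai vector $v=(2,H,2)$, so $v^2 = H^2 - 2\cdot 2\cdot 2 = 0$. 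The moduli space $M_X(v)$ is therefore a K3 surface, and the construction identifies it with $Y$. Concretely, the stable bundles with this Mukai vector are the restrictions of the two spinor bundles on each smooth quadric of the net, and the single bundle on each singular quadric. Globally the universal family exists only as a twisted sheaf, with twist given by a Brauer class $\alpha \in \Br(Y)$ of order dividing $2$.

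The second step applies Mukai's theorem, in the twisted form due to C\u{a}ld\u{a}raru and Huybrechts--Stellari, to $\overline{X}$ and $\overline{Y}$. The (twisted) universal sheaf induces a Hodge isometry $v^\perp/\Z v \cong H^2(Y,\Z)$, which is an actual isometry in the fine case. It also gives an injection of transcendental lattices $T(Y) \supseteq T(Y,\alpha)=\ker(\alpha\colon T(Y)\to \Q/\Z) \cong T(X)$ of index $\mathrm{ord}(\alpha)\le 2$, compatible with the Hodge structures. Alternatively, one can avoid the twisted machinery: the correspondence $\mathcal{E}$ induces a cohomological correspondence $H^2(X,\Q)\to H^2(Y,\Q)$ that is an isomorphism of rational Hodge structures on transcendental parts. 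Either route gives $\rk T(\overline{X}) = \rk T(\overline{Y})$, and hence $\rho(\Xbar)=\rho(\overline{Y})$.

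The main obstacle is the identification of $Y$ with the moduli space $M_X(v)$, and the verification that the resulting quasi-universal sheaf induces an isomorphism on rational transcendental cohomology rather than a degenerate map. The first thing to try is to cite Mukai~\cite{mukai:invent,mukai:bombay} directly for the statement $M_X(2,H,2)\cong Y$ when $X$ is a smooth complete intersection of three quadrics with smooth discriminant. Compatibility with the explicit model $y^2=\disc(q)$ then follows from the equivalence of the Clifford algebra and isotropic subspace descriptions~\cite[Appendix~B]{ABB:quadrics}. A fallback is to work with the rational correspondence given by the incidence variety of $\mathcal{M}$ and $X$, and to check nondegeneracy on $H^{2,0}$. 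Since both $H^{2,0}$ spaces are one-dimensional and the transcendental parts are irreducible Hodge structures, the correspondence is injective on $T(X)_\Q$ as soon as it is nonzero there. Applying the same argument to the transpose correspondence gives the reverse inequality of ranks, and hence equality.
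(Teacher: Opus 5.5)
Your proposal is correct and follows essentially the paper's route: reduce to $\C$ by rigidity of the N\'eron--Severi group, identify $Y$ with Mukai's moduli space $M_X(2,H,2)$ (citing Mukai), and use the $\alpha$-twisted universal sheaf with $\alpha\in\Br(Y)[2]$ to get $T(X)\hookrightarrow T(Y)$ of index at most $2$, so the transcendental ranks, and hence the geometric Picard ranks, agree. One slip: your opening sentence reverses the roles, since it is $Y$ that is realized as a moduli space of sheaves on $X$, but the body of your argument does this correctly.
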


Since the base change map $\NS(X \times_{\Spec \Q} \Spec \overline{\Q}) \to \NS(X \times_{\Spec \Q} \Spec \C)$ is an isomorphism by rigidity for the N\'eron--Severi group, cf.\ \cite[Proposition~3.1]{maulik-poonen}, to prove Proposition~\ref{lemma:tfmpartnerpreservespic} we are reduced to working over the complex numbers.  

To this end, we briefly review Mukai's theory of isogenies of K3 surfaces and twisted derived equivalence. More details can be found in \cite[Section~16.4]{huybrechts_lectures_2016} and \cite{huybrechts:isogeny}.  Recall the Torelli theorem, which states that K3 surfaces $X$ and $Y$ over $\C$ are isomorphic if and only if there exists a Hodge isometry $H^2(X,\Z) \to H^2(Y,\Z)$.  More generally, a \emph{Hodge isogeny} between K3 surfaces $X$ and $Y$ over $\C$ is a rational Hodge isometry $H^2(X,\Q) \to H^2(Y,\Q)$.  Note that any Hodge isogeny restricts to an isomorphism $\NS(X) \otimes_\Z \Q \isom H^{1,1}(X,\Q) \to H^{1,1}(Y,\Q) \isom \NS(Y) \otimes_\Z \Q$, hence $X$ and $Y$ have the same Picard rank.  One source of Hodge isogenies is considering moduli spaces of sheaves.  The \emph{Mukai lattice} of $X$ is
\[\widetilde{H}(X, \Z) = H^0(X, \Z)\oplus H^2(X, \Z) \oplus H^4(X, \Z)\]
with the Mukai pairing $v.w = v_2.w_2 - v_0.w_4 - v_4.w_0$, where $v_i.w_j$ is the usual cup product pairing in $H^*(X,\Z)$.
For $v \in \widetilde{H}(X,\Z)$, write $M_X(v)$ for the moduli space of stable sheaves $\EE$ on $X$ with Mukai vector 
\[
v(\EE) = \mathrm{ch}(\EE)\mathrm{Td}_X^{1/2} = (\mathrm{rk}(\EE),c_1(\EE),c_1(\EE)^2/2-c_2(\EE) + \mathrm{rk}(\EE)) = v.
\]  
If $v^2 = 0$ then $M_X(v)$ is also a K3 surface, see
\cite[Theorem~1.4]{mukai:bombay}.  If $M_X(v)$ is a fine moduli space
then $X$ and $M_X(v)$ are \emph{Fourier--Mukai partners}, where the
universal sheaf on $X \times M_X(v)$ determines a Fourier--Mukai
equivalence $\Db(X) \to \Db(M_X(v))$ of bounded derived categories.
In general, $M_X(v)$ is not a fine moduli space, and $X$ and $M_X(v)$
are \emph{twisted Fourier--Mukai partners}. Indeed, the universal
sheaf on $X \times M_X(v)$ is $p^*\alpha$-twisted for the class
$\alpha\in \Br(M_X(v))$ that obstructs the fineness of the moduli
space, where $p : X \times M_X(v) \to M_X(v)$ is the projection.  This
universal twisted sheaf determines a twisted Fourier--Mukai
equivalence $\Db(X) \cong \Db(M_X(v),\alpha)$, see
\cite[Theorem~1.3]{caldararu:nonfine}.  Following
Mukai~\cite[Corollary~6.5]{mukai:bombay},
Huybrechts~\cite[Section~1.3]{huybrechts:isogeny} constructs a Hodge isogeny
between $X$ and $M_X(v)$ using Chern characters, in $H^2(X,\Q) \otimes H^2(M_X(v),\Q)$, of the universal twisted sheaf to induce a cohomological correspondence.

Returning to the case where $X \subset \P^5$ is a complete
intersection of three quadrics with associated discriminant $Y \to
\P^2$, Mukai \cite[Example~0.9]{mukai:invent},
\cite[Example~2.2]{mukai:sugaku} shows that $Y = M_X(v)$ is a moduli
space of stable sheaves on $X$ with Mukai vector $v = (2, H, 2)$.
Moreover, the universal twisted sheaf on $X \times Y$ induces an
equivalence $\Db(X) \isom \Db(Y,\alpha)$ for a Brauer class $\alpha
\in \Br(Y)[2]$ that can be defined explicitly in terms of the net of
quadrics, see \cite[Section~3.2,~Lemma~15]{mckinnie_brauer_2017}, and the
induced isogeny $H^2(X,\Q) \to H^2(Y,\Q)$ determines an inclusion $T_X \hookrightarrow T_Y$ of
transcendental lattices with cokernel $\Z/2\Z$. This is Mukai's
isogeny between a degree 8 K3 surface $X$ and its degree 2
discriminant K3 surface $Y$, which yields
Proposition~\ref{lemma:tfmpartnerpreservespic}.

\smallskip

Finally, we give a condition to ensure that a K3 surface of degree 8 over $\Q$ is Noether--Lefschetz general in terms of its discriminant K3 surface.  

\begin{proposition}
\label{thm:disc_rank1}
Let $X\subset \P^5_\Q$ be a smooth complete intersection of three quadrics whose associated discriminant $Y \to \P^2$ is smooth, and let $Y_p \to \P^2_{\F_p}$ be the reduction modulo $p$ for a prime $p > 2$ of good reduction of $Y$. If $\rho(\overline{Y}_p) = 2$, the branch locus of $\overline{Y}_p \to \P^2_{\F_p}$ has a tritangent line, and the branch locus of $\overline{Y}$ has no tritangent line, then $\rho(\Xbar) = \rho(\Ybar) = 1$.
\end{proposition}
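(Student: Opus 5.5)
By Proposition~\ref{lemma:tfmpartnerpreservespic} we have $\rho(\Xbar)=\rho(\Ybar)$, so it suffices to show $\rho(\Ybar)=1$. The argument runs through the specialization homomorphism $\overline{\spe}:\NS(\Ybar)\to\NS(\overline{Y}_p)$, which is injective and preserves the intersection product. Injectivity and $\rho(\overline{Y}_p)=2$ give $\rho(\Ybar)\le 2$. The plan is to rule out $\rho(\Ybar)=2$ by showing that $\overline{\spe}$ would then be an isomorphism, and that this transports the tritangent line from characteristic $p$ back to $\Ybar$.

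\textbf{Step 1 (isomorphism).} Suppose $\rho(\Ybar)=2$. Then $\overline{\spe}$ is an injection between free abelian groups of the same rank 2, so its cokernel is finite. Since $p>2$, Lemma~\ref{lemma:cokerneltorsionfree} says the cokernel is torsion free, hence zero. So $\overline{\spe}$ is an isometric isomorphism $\NS(\Ybar)\isom\NS(\overline{Y}_p)$, and it sends the polarization $H=f^*\OO(1)$ to $H_p$.

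\textbf{Step 2 (the lattice on the $p$ side).} A tritangent line $\ell$ to the branch sextic of $\overline{Y}_p$ has preimage that splits as $L_1+L_2$, with $L_1,L_2$ smooth rational curves. They satisfy $L_i^2=-2$, $H_p\cdot L_i=1$, and $L_1+L_2=H_p$. Hence $\NS(\overline{Y}_p)$ contains the class $L_1$ with $H_p^2=2$, $H_p\cdot L_1=1$, $L_1^2=-2$. The sublattice spanned by $H_p$ and $L_1$ has discriminant $-5$, which is squarefree, so this sublattice is all of $\NS(\overline{Y}_p)$.

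\textbf{Step 3 (the main obstacle: recovering a curve over $\Qbar$).} Let $D=\overline{\spe}^{-1}(L_1)$. It satisfies $D^2=-2$ and $D\cdot H=1$. Riemann--Roch gives $h^0(D)+h^0(-D)\ge 1$, and $D\cdot H>0$ rules out $-D$ effective, so $D$ is effective. Write $D$ as a sum of irreducible curves. Each component has positive $H$-degree, and the total degree is $D\cdot H=1$. So exactly one component $C$ has $H\cdot C=1$ and all others have degree $0$; but $H$ is ample, so there are no others, and $D=C$ is irreducible. By adjunction $C$ is a smooth rational curve with $f_*C$ a line $\ell'\subset\P^2$, and $f^*\ell'=H=C+C'$ with $C'=H-C\ne C$.

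Then $f^{-1}(\ell')$ is reducible, so the restriction of the double cover to $\ell'$ splits. This forces the branch sextic to meet $\ell'$ with even multiplicities everywhere, so $\ell'$ is a tritangent line, including degenerate tangency cases if needed. That contradicts the hypothesis that the branch locus of $\overline{Y}$ has no tritangent line. Therefore $\rho(\Ybar)=1$, and hence $\rho(\Xbar)=1$.

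The delicate point is Step 3: passing from a numerical class to an actual irreducible curve, and checking the equivalence ``the line splits $\Leftrightarrow$ the line is tritangent'' for degenerate contact. I would handle the first by the ampleness and degree-one argument above, and the second by noting that $y^2=\disc|_{\ell'}$ splits exactly when the restricted sextic is a square.
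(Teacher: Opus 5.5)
Your proposal is correct and follows the paper's proof: reduce to $\Ybar$ via Mukai's isogeny, use injectivity of $\overline{\spe}$ together with the torsion-free cokernel to get surjectivity, then lift one component of the split preimage of the tritangent line to reach a contradiction. Your Step 3 (Riemann--Roch effectivity, irreducibility from $H\cdot D=1$, and ``split $\Leftrightarrow$ restricted sextic is a square'') spells out what the paper delegates to Elsenhans--Jahnel's Example~1.7, while your Step 2 (the discriminant $-5$ argument) is harmless but unnecessary, since Step 1 already gives surjectivity.
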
   
\begin{proof}
We follow the strategy outlined in \cite[Example~1.7]{elsenhans_picard_2011}, \cite[Proposition~5.3]{hassett-varilly}, and \cite[Proposition~26]{mckinnie_brauer_2017} to prove that $\rho(\Ybar)=1$, and then appeal to Proposition~\ref{lemma:tfmpartnerpreservespic}. Indeed, by Proposition~\ref{lemma:tfmpartnerpreservespic} and the injectivity of the specialization map, we have that $\rho(\Xbar)=\rho(\Ybar) \leq 2$.  Assume, to get a contradiction, that $\rho(\Xbar)=\rho(\Ybar) = 2$.  In this case, the image of $\overline{\spe} : \Pic(\Ybar) \to \Pic(\Ybar_p)$ has rank 2, so has torsion cokernel. By Lemma~\ref{lemma:cokerneltorsionfree}, the cokernel is torsion free, so it is 0, hence $\overline{\spe}$ is surjective. 

Now, the pullback of the tritangent line to $\Ybar_p$ splits into two irreducible components. Let $L$ be the divisor class of one of these components, and let $L' \in \Pic(\Ybar)$ be a lift of $L$ under $\overline{\spe}$. As in \cite[Example~1.7]{elsenhans_picard_2011}, $L$ then must arise from a tritangent line to the branch locus of $Y \to \P^2$, but there are none by hypothesis, giving a contradiction.
\end{proof}

To find explicit $X$ over $\Q$ satisfying the hypotheses of Proposition~\ref{thm:disc_rank1} for a fixed prime~$p$, one can first iterate over complete intersections $X_p \subset \P^5_{\F_p}$ and check whether the associated discriminant double cover $Y_p$ admits a tritangent.  We then use the techniques of Section~\ref{sec:prereq} to check whether $\rho(X_p) = 2$.  One can find examples after a number of iterations:

\begin{example}
\label{ex:deg8}
Let $X_{47}=V(q_1, q_2, q_3)\subset \P^5$ over $\F_{47}$, where
\begin{align*}
q_1 &= 5x_0^2 + 6x_0x_1 + x_1^2 + 10x_0x_2 + 45x_1x_2 + x_2^2 + 6x_0x_3 \\ &\quad+ 45x_1x_3 + 45x_2x_3 + 37x_0x_4 + 39x_1x_4 + 39x_2x_4 \\ &\quad+ 2x_4^2 + 10x_0x_5 + 10x_1x_5 + 45x_2x_5
+ 8x_4x_5 + 2x_5^2 \\
q_2 &= 43x_0^2 + 41x_0x_1 + 41x_0x_2 + 8x_1x_2 + 44x_2^2 + 2x_0x_3 \\ &\quad+ 39x_1x_3 + 5x_3^2 + 45x_0x_4 + 2x_1x_4 + 45x_2x_4 + 10x_3x_4 + 3x_4^2 \\ &\quad+ 43x_0x_5 + 6x_1x_5 + 
2x_2x_5 + 2x_3x_5 + 10x_4x_5 + 3x_5^2 \\
q_3 &= 5x_0^2 + 45x_0x_1 + 46x_1^2 + 37x_0x_2 + 2x_1x_2 + 4x_2^2 + 2x_0x_3 \\ &\quad+ 8x_1x_3 + 6x_2x_3 + 42x_3^2 + 8x_0x_4 + 39x_1x_4 + 43x_2x_4 + 4x_3x_4 \\ &\quad+ 2x_1x_5 + 
43x_2x_5 + 43x_3x_5 + 39x_4x_5 + 44x_5^2 
\end{align*}
and $(x_0:\dotsc:x_5)$ are homogeneous coordinates on $\P^5$.  Then $X_{47}$ is a smooth complete intersection whose discriminant $Y_{47} \to \P^2$ is smooth with branch locus defined by the vanishing of
\begin{multline*}
  13u^6 + 43u^5v + 19u^4v^2 + 7u^3v^3 + 46u^2v^4 + 11uv^5 + 21v^6 
+ 43u^5w + 22u^4vw \\+ u^3v^2w + 27u^2v^3w + 17uv^4w + 41v^5w 
+ 26u^4w^2 + 42u^3vw^2 + 33u^2v^2w^2 \\+ 46uv^3w^2 + 19v^4w^2 
+ 42u^3w^3 + 8u^2vw^3 + 34uv^2w^3 \\+ 17v^3w^3 
+ 41u^2w^4 + 32uvw^4 + 21v^2w^4 
+ 46uw^5 + 33vw^5 + 17w^6,
\end{multline*}
where $(u: v: w)$ are homogenous coordinates for $\P^2$.
The discriminant $Y_{47}$ admits the tritangent line $V(u+32y+17w)$ and $\rho(\Xbar_{47}) = \rho(\Ybar_{47}) = 2$, which we verify, using Theorem~\ref{lemma:weilbound}, by computing the Weil polynomial for $Y_{47}$ to be
\begin{multline*}
(t-47)^2(t^{20} - 15t^{19} - 2491t^{18} + 92778t^{17} + 2387929t^{16} - 34157767t^{15} -
6421660196t^{14} \\- 53896076645t^{13} + 27357648505002t^{12} + 95245146647044t^{11} -
49241740816521748t^{10} \\+ 210396528943320196t^{9} + 133496597614536664362t^{8} -
580957415544742891205t^{7} \\- 152907991771376328965156t^{6} -
1796668903313671865340583t^{5} \\+ 277457012068868469490452889t^{4} +
23813049644519406903224087082t^{3}\\ - 1412340634868996252284076212411t^{2} -
18786795237408346374722145844335t \\+ 2766668711962335809450748011342401).
\end{multline*}
\end{example}

With such an example in hand, we can choose random lifts $X$ to $\Z$, then using a Gr\"obner basis algorithm, cf.\ \cite[Algorithm~8]{elsenhans_jahnel_2008},  we can quickly check whether the discriminant double cover $Y$ admits a tritangent line over $\Qbar$.

\begin{corollary}
    Let $X=V(q_1, q_2, q_3)\subset \P^5_{\Q}$, where
    \begin{align*}
                q_1 &= -136x_0^2 - 464x_0x_1 - 140x_1^2 - 272x_0x_2 + 374x_1x_2 + x_2^2 + 288x_0x_3
+ 186x_1x_3 \\ &\quad + 468x_2x_3 + 47x_3^2 - 292x_0x_4 - 196x_1x_4 + 274x_2x_4 -
188x_3x_4 + 237x_4^2 - 84x_0x_5 \\&\quad + 386x_1x_5 + 562x_2x_5 - 282x_3x_5 - 274x_4x_5
- 139x_5^2 \\
        q_2 &= 43x_0^2 + 88x_0x_1 + 141x_1^2 - 100x_0x_2 + 384x_1x_2 + 185x_2^2 -
280x_0x_3 - 8x_1x_3 \\&\quad  - 376x_2x_3 - 89x_3^2 + 562x_0x_4 + 190x_1x_4 + 562x_2x_4 +
104x_3x_4 + 144x_4^2 - 98x_0x_5\\&\quad  - 182x_1x_5 - 468x_2x_5 + 190x_3x_5 - 84x_4x_5 -
44x_5^2 \\
        q_3 &= 193x_0^2 - 2x_0x_1 + 234x_1^2 - 292x_0x_2 + 190x_1x_2 + 51x_2^2 + 2x_0x_3 -
180x_1x_3 \\&\quad + 6x_2x_3 + 183x_3^2 + 8x_0x_4 + 274x_1x_4 + 184x_2x_4 + 286x_3x_4 +
470x_0x_5 - 280x_1x_5\\&\quad  + 560x_2x_5 + 90x_3x_5 - 196x_4x_5 + 185x_5^2
    \end{align*}
Then $X \subset \P^5$ is a smooth complete intersection K3 surface of degree 8 over $\Q$ with geometric Picard rank 1. 
\end{corollary}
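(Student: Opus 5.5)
The plan is to apply Proposition~\ref{thm:disc_rank1} with $p=47$, using Example~\ref{ex:deg8} as the reduction. First I would check that the given $q_1,q_2,q_3$ reduce modulo $47$ to the quadrics of Example~\ref{ex:deg8}. Since every coefficient is an integer, this is a direct coefficientwise congruence check; for instance $-136\equiv 5$ and $47\equiv 0 \pmod{47}$. It follows that $V(q_1,q_2,q_3)$ over $\Z_{(47)}$ is a flat projective model of $X$ whose special fiber is $X_{47}$. By the example, $X_{47}$ is a smooth complete intersection whose discriminant $Y_{47}$ is smooth.

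Smoothness is an open condition on the base, and the generic fiber of this model is $X$. So $X$ is a smooth complete intersection of three quadrics, hence a K3 surface of degree $8$. Because $47$ is odd, the discriminant construction $y^2=\disc(q)$ commutes with reduction. The integral model of $Y\subset\P(1,1,1,3)$ therefore has special fiber $Y_{47}$, which is smooth with smooth branch sextic. Hence $Y$ is smooth, since the branch sextic $V(\disc(q))$ over $\Q$ is smooth because its reduction is, and $47>2$ is a prime of good reduction for $Y$.

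The remaining hypotheses of Proposition~\ref{thm:disc_rank1} over $\F_{47}$ are supplied by Example~\ref{ex:deg8}. These are $\rho(\Ybar_{47})=2$, computed via Theorem~\ref{lemma:weilbound} from the Weil polynomial whose only factor of the form $(t-47\zeta)$ is $(t-47)^2$, and the existence of the tritangent line. What is left is to show that the branch sextic $C=V(\disc(q))\subset\P^2_{\Q}$ has no tritangent line over $\Qbar$. This is the main obstacle, and it is purely computational.

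For that step I would follow \cite[Algorithm~8]{elsenhans_jahnel_2008}. Parametrize lines in affine charts of the dual plane, say $w=au+bv$, together with the remaining charts. Substitute into the sextic to get a binary sextic in $(u:v)$ whose coefficients are polynomials in $a,b$. Impose that this binary sextic is a constant multiple of a perfect square, meaning a square of a cubic times a scalar. Introducing the unknown cubic's coefficients and a scalar then gives a polynomial system in finitely many variables. A Gr\"obner basis computation over $\Q$ should show this ideal is the unit ideal in every chart, so no tritangent exists over $\Qbar$.

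To keep the computation feasible, I might first compute the Gr\"obner basis modulo several auxiliary primes $\ell\neq 47$ and check that the ideal is trivial there. Triviality of the reduction modulo some prime implies triviality over $\Q$, since a point over $\Qbar$ would reduce to a point over $\overline{\F}_\ell$ after clearing denominators for suitable $\ell$. More precisely, the variety is projective over the parameters, so a $\Qbar$-point yields an $\overline{\F}_\ell$-point for every $\ell$. One carefully chosen prime $\ell$ where the scheme is empty therefore suffices.

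Proposition~\ref{thm:disc_rank1} then gives $\rho(\Xbar)=\rho(\Ybar)=1$.
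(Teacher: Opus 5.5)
Your proposal is correct and follows the paper's route. The quadrics reduce modulo $47$ to those of Example~\ref{ex:deg8}, smoothness of $X$ and $Y$ and good reduction at $47$ come from the special fiber, and Proposition~\ref{thm:disc_rank1} then applies once a Gr\"obner basis computation as in \cite[Algorithm~8]{elsenhans_jahnel_2008} rules out tritangent lines to $V(\disc(q))$ over $\Qbar$; your optional shortcut of running that check modulo an auxiliary prime $\ell$ is also sound, because the tritangent locus is a closed subscheme of the proper dual plane over $\Z_{(\ell)}$, so any $\Qbar$-tritangent specializes to an $\overline{\F}_\ell$-tritangent.
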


We note that Elsenhans and Jahnel~\cite[\S8]{elsenhans_jahnel:which} give an appealing example of a K3 surface $Y \to \P^2$ defined over $\Q$ with geometric Picard rank 1, whose sextic branch curve $C \subset \P^2$ is the determinant of a $6 \times 6$ matrix of linear forms, and whose reductions modulo $5$ are smooth and admit a tritangent line.  From these examples, and using Proposition~\ref{thm:disc_rank1}, one can also extract K3 surfaces of degree 8 over $\Q$ with geometric Picard rank 1.  A similar example can be found in \cite[\S5.4]{mckinnie_brauer_2017}.

\begin{remark}
Note that the locus of polarized K3 surfaces of degree 2 admitting a line tritangent to its branch locus is a Noether--Lefschetz divisor $\KK_{2,1}^{-2} \subset \KK_2$. In particular, by Proposition~\ref{prop:discriminantcontinuous}, we see that the locus in $\KK_8$, consisting of polarized K3 surfaces $X$ of degree 8 whose projective model is a smooth complete intersection of three quadrics in $\P^5$ and whose discriminant $Y \to \P^2$ is smooth and admits a tritangent line, is Zariski closed in $\KK_8^\circ$. We will use this in Section~\ref{sec:density} in the proof that the set of Noether--Lefschetz general K3 surfaces is Zariski dense in $\KK_8$.
\end{remark}

\section{Degree 6 Case}
\label{sec:6}

Every basepoint free nonhyperelliptic polarized K3 surface $(X,H)$ of degree 6 is a complete intersection of type $(2,3)$ in $\P^4$, i.e., a complete intersection of a quadric and cubic hypersurface, see \cite[Theorem~6.1]{saint-donat}.  To bound the geometric Picard rank of $X$, one could follow the strategy of van Luijk~\cite{van_luijk_k3_2007}, computing the Weil polynomial of a reduction $X_p$ at a prime of good reduction and applying Theorem~\ref{lemma:weilbound} in view of the injectivity of the specialization map.  To compute the Weil polynomial, one could
hope to count points over $\F_{p^n}$ for $n=1,\dotsc,12$.  However, a naive point counting algorithm that enumerates over all points in $\P^4$ would be on the border of computational feasibility.  In theory, there exist $p$-adic algorithms to compute the Weil polynomial of a complete intersection, which employ the ``Cayley trick'' to transform a complete intersection to a hypersurface and then use \cite{CHK}, but these are not yet publicly implemented. Instead, in the spirit of \cite[\S3]{addington_auel}, we leverage the geometry of linear projections to design a fast specialized algorithm.  

We first recall that any big and nef divisor $L$ on a K3 surface with $L^2 = 2$ determines a morphism $X \to \P^2$, which is a blow-down followed by a double cover as long as $(X,H)$ is not unigonal, i.e., there exists no divisor $E$ with $L.E=1$ and $E^2=0$.  In the later case, $(L-2E)^2 = -2$ and $L.(L-2E)=0$, hence $L$ is not ample and has Zariski decomposition $L = 2E + (L-2E)$ with moving part $2E$.  In particular, the map $X \to \P^2$ induced by $L$ factors through the elliptic fibration $X \to \P^1$ induced by $E$ and a Veronese embedding $\P^1 \subset \P^2$, so that $X \to \P^2$ is not dominant, see \cite[Section~5,~Scholium]{mayer}.  Conversely, when the map $X \to \P^2$ induced by $L$ is dominant, then any $(-2)$-curve $C \subset X$ with $L.C=0$ is blown down, and the map $X \to \P^2$ factors as a sequence of blow-downs $X \to X_0$ followed by a double cover $X_0 \to \P^2$, see \cite[\S~5]{saint-donat} or \cite[III.3]{BHPV}.  By Riemann--Hurwitz (and assuming we are in characteristic $\neq 2$), the branch locus of this double cover must be a sextic curve, which is smooth if and only if there are no blown-down curves.

With this in mind, we recall the following simplification of the geometry of a complete intersection surface in $\P^4$ that contains a line.  

\begin{lemma} 
\label{lem:line_complete_intersection_case}
Let $X \subset \P^4$ be a nondegenerate $(a,b)$ complete intersection surface with hyperplane section $H$.  If the smooth locus of $X$ contains a line $L$, then projection from $L$ restricts to a dominant generically finite morphism $X \to \P^2$ of generic degree $(a-1)(b-1)$ induced by the linear system of $H-L$. 
\end{lemma}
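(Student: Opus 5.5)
Choose coordinates so that $L = V(x_0,x_1,x_2)$. Projection from $L$ is then the rational map $\pi\colon \P^4 \dashrightarrow \P^2$, $x \mapsto (x_0:x_1:x_2)$, which is a morphism off $L$. The plan is to show that $\pi|_X$ extends to a morphism, to identify that morphism with the linear system $|H-L|$, and then to compute its degree by counting residual intersections with a plane through $L$.

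\textbf{Extension over $L$.} The restriction $\pi|_X$ is given by the sections $x_0,x_1,x_2$ of $\OO_X(H)$, and all three vanish along $L$. Since $L$ lies in the smooth locus of $X$, $L$ is a Cartier divisor there, so we may divide by the local equation of $L$. The three sections then become sections of $\OO_X(H-L)$. On $X\setminus L$ they have no common zero, because the common zero set of $x_0,x_1,x_2$ in $\P^4$ is exactly $L$.

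Along $L$ we need that the residual sections have no common zero. I would argue with tangent planes: at $P\in L$, the image of $P$ under the extended map is the point of $\P^2$ corresponding to the plane spanned by $L$ and the tangent plane $T_PX$. The only thing to check is that $T_PX$ is $2$-dimensional, which holds because $P$ is a smooth point of $X$; then $T_PX \supset L$ determines a well-defined plane through $L$. Equivalently, in local terms, if $x_0,x_1,x_2$ all vanished to order $\ge 2$ along $L$ at some point $P$, then their differentials would vanish on $T_PX$. That would force $T_PX \subseteq L$, contradicting $\dim T_PX = 2$.

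I expect this basepoint-freeness along $L$ to be the main technical step. My approach is to write $F_a, F_b$ locally as $x_0 A_i + x_1 B_i + x_2 C_i$ and use that the Jacobian has rank $2$ along $L$. Granting this, $\pi|_X$ extends to a morphism $\phi\colon X\to\P^2$ with $\phi^*\OO(1) = \OO_X(H-L)$, so $\phi$ is induced by (a subsystem of) $|H-L|$.

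\textbf{Degree and generic finiteness.} A general line $\ell \subset \P^2$ pulls back to $X\cap \Lambda$, where $\Lambda \cong \P^3$ is a hyperplane containing $L$. Two general such hyperplanes meet in a plane $\Pi \supset L$. The fiber of $\phi$ over a general point is then the residual intersection of $X\cap\Pi$ away from $L$.

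Inside $\Pi \cong \P^2$, the equations $F_a$ and $F_b$ restrict to plane curves of degrees $a$ and $b$, each containing the line $L$. Write $F_a|_\Pi = \ell_L \cdot G$ and $F_b|_\Pi = \ell_L \cdot G'$, with $\deg G = a-1$ and $\deg G' = b-1$. The residual points are $V(G,G')$ minus points of $L$, so Bézout gives at most $(a-1)(b-1)$ of them.

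To get exactly $(a-1)(b-1)$, and dominance, I would compute the degree as
\[
(H-L)^2 = H^2 - 2H\cdot L + L^2 = ab - 2 + L^2 .
\]
Adjunction with $K_X = (a+b-5)H$ gives $L^2 = -2 - (a+b-5) = 3-a-b$. Hence
\[
(H-L)^2 = ab - a - b + 1 = (a-1)(b-1) .
\]
If $(a-1)(b-1) > 0$, this positive self-intersection shows that $\phi$ is dominant, and so generically finite of degree $(a-1)(b-1)$. If the product is $0$, the surface is degenerate or the statement is vacuous. Nondegeneracy of $X$ ensures $a,b \ge 2$.
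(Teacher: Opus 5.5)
Your proof is correct, but it takes a genuinely different route from the paper's.

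\textbf{The paper's route.} It resolves the projection by viewing $X$ as its strict transform in $\Bl_L\P^4$; this identification is where smoothness along $L$ enters. It then identifies the fiber over each point of $\P^2$ with the intersection $C_{a-1}\cap C_{b-1}$ of the two residual plane curves in the corresponding plane through $L$. A dimension argument shows the generic such intersection is finite, and the degree $(a-1)(b-1)$ is read off from B\'ezout.

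\textbf{Your route.} You build the extension directly as the map given by the three residual sections $x_i/\lambda$ of $\OO_X(H-L)$. Smoothness along $L$ is used only in your differential argument for base-point-freeness. That argument is already complete as written, so the Jacobian remark is unnecessary: a common zero of the $x_i/\lambda$ at $P\in L$ forces $d(x_i|_X)_P=0$ for all $i$, hence $T_PX\subseteq T_PL$, contradicting $\dim T_PX=2$. You then obtain dominance, generic finiteness and the degree in one step from
\[
(H-L)^2=ab-2+(3-a-b)=(a-1)(b-1)>0
\]
together with the projection formula.

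\textbf{What each buys.} Your route builds the identification with $|H-L|$ into the construction, and it makes the finiteness and degree count more rigorous than the paper's informal ``curve bundle'' argument. The cost is using adjunction on the Gorenstein surface $X$, with $\omega_X\cong\OO_X(a+b-5)$ and $L$ Cartier since it lies in the smooth locus. You also rely on $X$ being irreducible, an assumption the paper makes implicitly as well. What your route does not provide is the fiberwise description of $\phi$ as $C_{a-1}\cap C_{b-1}$ inside each plane through $L$. That description is exactly what the paper exploits in Theorem~\ref{thm:dominant} to identify the branch locus with $V(\det A)$ via the line--conic tangency criterion.

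\textbf{The B\'ezout paragraph.} It is superfluous for your argument, and as written it is slightly imprecise. A general line in $\P^2$ pulls back to the residual of $L$ in $X\cap\Lambda$, not to $X\cap\Lambda$ itself. Also, B\'ezout only bounds the fiber once you know $V(G,G')$ is finite.
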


\begin{proof}
Resolving the projection from $L$ yields a morphism $\phi : \Bl_{L}\P^4 \to \P^2$.  As $L \subset X$ is a smooth divisor in the smooth locus, the strict transform coincides with $X$, giving an embedding $X \subset \Bl_L \P^4$ and we consider the restriction $\phi|_X : X \to \P^2$.   By the construction of the projection map in terms of planes through $L$, the projective morphism $\phi|_X$ corresponds to the the linear system of $H-L$.  The fibers of the projection are the residual intersections of $X$ with planes containing $L$.  Writing $X=X_a\cap X_b$ as a complete intersection of hypersurfaces in $\P^4$ of degree $a$ and $b$, the residual intersections of $X_a$ and $X_b$ with a plane $P$ containing $L$ are plane curves $C_{a-1}, C_{b-1} \subset P$ of degree $a-1$ and $b-1$, which are both positive since $X$ is nondegenerate.  Hence the fiber of $\phi|_X : X \to \P^2$ corresponding to $P$ is the intersection $C_{a-1} \cap C_{b-1}$ of these plane curves.  Over the generic fiber, this intersection of plane curves must be finite (otherwise they would share a component and hence $X$ would be birational to a curve bundle over $\P^2$, which is impossible since $X$ is a surface), and hence by B\'ezout's theorem is a finite scheme of degree $(a-1)(b-1)$.
\end{proof}

In the case of a sextic K3 surface, we can say more.

\begin{lemma}
\label{lem:sextic_k3_decomp}
Let $X \subset \P^4$ be a smooth sextic K3 surface containing a line $L$.  The projection $X \to \P^2$ from $L$ is the composition of a blow-down of exceptional curves and a finite flat double cover branched over a sextic curve.
\end{lemma}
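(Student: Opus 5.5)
We apply Lemma~\ref{lem:line_complete_intersection_case} with $(a,b)=(2,3)$ and then use the structure theory of degree 2 linear systems recalled just before it.

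First, since $X$ is smooth, $L$ lies in the smooth locus. A smooth sextic K3 is nondegenerate in $\P^4$, since its hyperplane class embeds it by the complete linear system $|H|$ with $h^0(H)=5$. So Lemma~\ref{lem:line_complete_intersection_case} applies: projection from $L$ gives a dominant, generically finite morphism $\phi\colon X\to\P^2$ of generic degree $(2-1)(3-1)=2$, induced by the linear system $|H-L|$.

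Next we compute the numerics of $D=H-L$. Since $L$ is a smooth rational curve on a K3 surface, adjunction gives $L^2=-2$, and $H.L=1$ because $L$ is a line. Hence
\[
D^2 = H^2 - 2H.L + L^2 = 6-2-2 = 2 .
\]
The divisor $D$ is base point free, since $\phi$ is a morphism, and therefore nef. Because $\phi$ is dominant onto $\P^2$, $D$ is also big. By the discussion preceding Lemma~\ref{lem:line_complete_intersection_case}, the unigonal case is excluded precisely because the map is dominant: in the unigonal case the map factors through $\P^1$ and its image is a conic.

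We then invoke the recalled structure result (Saint-Donat~\S5, or \cite[III.3]{BHPV}). A big and nef $D$ with $D^2=2$ whose associated map is dominant factors as a contraction $X\to X_0$ of the $(-2)$-curves $C$ with $D.C=0$, followed by a double cover $X_0\to\P^2$. To make this concrete, we take the Stein factorization $X\to X_0\to\P^2$. Here $X_0\to\P^2$ is finite of degree $2$ (matching the generic degree), and $X\to X_0$ contracts exactly the curves contracted by $\phi$. These are curves $C$ with $D.C=0$, and by the Hodge index theorem they are $(-2)$-curves (ADE configurations). So $X_0$ has only rational double points and is Cohen--Macaulay, in fact normal Gorenstein. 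A finite morphism from a Cohen--Macaulay surface to the smooth surface $\P^2$ is flat by miracle flatness, so $X_0\to\P^2$ is a finite flat double cover.

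It remains to identify the branch locus. In characteristic $\neq 2$ such a cover has the form $X_0=\Spec(\OO\oplus\OO(-k))$ with branch curve of degree $2k$. Triviality of $\omega_{X_0}$ (rational double points are crepant) together with the double cover formula $\omega_{X_0}=\pi^*\OO_{\P^2}(-3+k)$ forces $k=3$, so the branch curve is a sextic.

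The main obstacle is justifying the flatness and the normality of $X_0$ cleanly, and in particular showing that the contracted curves have only rational double point singularities in the image. We handle this via Artin's contractibility criterion for negative definite configurations of $(-2)$-curves, rather than relying on an explicit coordinate description of the residual conics.
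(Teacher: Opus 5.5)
Your proof is correct and follows the paper's argument. You apply Lemma~\ref{lem:line_complete_intersection_case} to see that the projection is induced by $|H-L|$ with $(H-L)^2=2$, then invoke the structure theory of degree~2 linear systems (Saint-Donat, BHPV) to get a contraction of $(-2)$-curves followed by a double cover branched over a sextic; your Stein factorization, miracle flatness, and canonical bundle computation just make explicit the flatness and sextic-branch claims that the paper leaves to those references and Riemann--Hurwitz.
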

\begin{proof}
By Lemma~\ref{lem:line_complete_intersection_case}, projection $X \to \P^2$ from $L$ is dominant and induced by the linear system of $H-L$.  Since $(H-L)^2 = 6 - 2\cdot 1 -2 = 2$ and $H-L$ induces a dominant map $X \to \P^2$, then as explained earlier, it must blow-down $(-2)$-curves $C$ such that $(H-L).C=0$ followed by a double cover, which must be branched over a sextic curve.
\end{proof}

\begin{remark}
\label{rem:6line}
We remark that in the moduli space $\KK_6$, the locus of polarized K3 surfaces $(X,H)$  of degree 6 ``containing a line'' i.e., an irreducible curve $L \subset X$ with $H.L=1$ and $L^2=-2$, is the Noether--Lefschetz divisor $\KK_{6, 1}^{-2}$.  Then $H-L$ is effective and $(H-L)^2=2$.  Moreover, $H-L$ is base point free (and the induced map $X \to \P^2$ is a double cover) if and only if $(X,H)$ is neither unigonal nor hyperelliptic, i.e., is not contained in the Noether--Lefschetz divisors $\KK_{6,1}^0$ nor $\KK_{6,2}^0$. 
\end{remark}

As a consequence, a degree 6 K3 surface containing a line admits a degree 2 model, on which we can use highly optimized Weil polynomial calculation algorithms implemented in \texttt{Magma}~\cite{magma} by Elsenhans and Jahnel.

However, to employ this algorithm, we need to be able to computationally verify that projection from the line is a double cover $X \to \P^2$ and we need to compute an explicit model as a hypersurface of weighted degree 6 in $\P(1,1,1,3)$, or, what will be enough for our purposes (see Remark~\ref{rem:twist}), at least explicitly compute the sextic branch curve in $\P^2$.  To this end, up to a linear change of variables, we can fix $L = V(x_0,x_1,x_2)$ where $(x_0:x_1:x_2:y_0:y_1)$ are homogenous coordinates for $\P^4$.  Assuming that $X=V(f_2, f_3) \subset \P^4$ contains $L$, where $f_2,f_3 \in k[x_0,x_1,x_2,y_1,y_2]$ are homogenous forms of degree 2 and 3, respectively, then $f_2$, $f_3$ can be expressed as 
\begin{equation}
\begin{split}
\label{eq:containline}
f_2 &= l_0y_0 + l_1y_1 + q \\
f_3 &= l_{00}y_0^2 + l_{01}y_{0}y_{1}+l_{11}y_1^2 + q_0y_0 + q_1y_1 + c
\end{split}
\end{equation}
where $l_0, l_1, l_{00}, l_{01}$, and $l_{11}$ are homogenous linear forms, $q, q_0, q_1$ are homogenous quadratic forms, and $c$ is a homogenous cubic form, all in $k[x_0, x_1, x_2]$. 

\begin{theorem}
\label{thm:dominant}
Let $k$ be a field of characteristic $\neq 2$ and let $X\subset\P^4$ be a smooth sextic K3 surface over $k$ containing a line $L \subset \P^4$.  In the notation of \eqref{eq:containline}, consider the following symmetric matrix
\begin{equation}
\label{eq:4mat}
A = 
\begin{pmatrix}
0 & v^t \\
v & M
\end{pmatrix}
=
\begin{pmatrix}
0   & l_0     & l_1     & q  \\
l_0 & 2l_{00} & l_{01}  & q_0 \\
l_1 & l_{01}  & 2l_{11} & q_1 \\
q   &q_0      & q_1     & 2c
\end{pmatrix}    
\end{equation}
of homogeneous forms on $\P^2$, write $g_6 = \det(A)$ and $g_3 =
\det(M)$, and let $D=V(g_6)$ and $C = V(g_3)$ in $\P^2$.  If $D$ is
smooth, then projection from $L$ determines a finite flat double cover
$\pi : X \to \P^2$ with branch locus $D$ that is tangent to the nodal cubic curve $C \subset \P^2$.  

For a generic choice of coefficients, $X \subset \P^4$ is smooth, $D \subset \P^2$ is smooth, $C \subset \P^2$ has a single node away from $D$, and $C$ intersects $D$ tangentially in 9 distinct points.
\end{theorem}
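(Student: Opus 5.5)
Fix a plane $P$ through $L$, i.e.\ a point $x=(x_0:x_1:x_2)\in\P^2$, and parametrize $P$ by $(t x_0 : t x_1 : t x_2 : y_0 : y_1)$. Substituting into \eqref{eq:containline}, $f_2$ and $f_3$ become homogeneous forms in $(t,y_0,y_1)$. Each is divisible by $t$, because $L=\{t=0\}$ lies in $X$. After dividing by $t$, the residual conic/line pair is
\[
\ell = l_0(x)y_0 + l_1(x)y_1 + q(x)t, \qquad Q = l_{00}y_0^2+l_{01}y_0y_1+l_{11}y_1^2+q_0 y_0 t + q_1 y_1 t + c\,t^2 .
\]
Here $Q$ is a ternary quadratic form whose Gram matrix (times 2) is $M(x)$, in the basis $(y_0,y_1,t)$ after reordering. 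The fibre of $\phi|_X$ over $x$ is $V(\ell,Q)$, consisting of two points of the line $\ell=0$ (Lemma~\ref{lem:line_complete_intersection_case} with $(a-1)(b-1)=2$).

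The restriction of a quadratic form with matrix $M$ to the hyperplane $v^\perp$ cut out by $\ell$ has discriminant given, up to sign and a power of 2, by the bordered determinant $\det\begin{pmatrix}0&v^t\\v&M\end{pmatrix}=g_6$. This is the standard identity $\det A = -v^t\adj(M)v$. So the two points of the fibre coincide exactly when $g_6(x)=0$, and $D$ is the branch locus. More invariantly, $X$ is globally the relative quadric $\{Q=0\}$ inside the $\P^1$-bundle $\{\ell=0\}\subset\P(\OO\oplus\OO\oplus\OO(-1))$ over $\P^2$. Its double cover is $y^2=\disc$, which is the standard discriminant cover of a conic bundle of rank-2 quadrics, exactly as the paper describes for nets of quadrics.

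The key steps, in order, are as follows.

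\textbf{Step 1 (finiteness).} Show that when $D$ is smooth, no fibre is positive dimensional. A positive-dimensional fibre would occur where $v(x)=0$, or where $\ell$ is a component of $Q$. At such points the rank of the restricted form drops by 2, and $g_6$ vanishes to order $\geq 2$, making $D$ singular; one checks this by differentiating $-v^t\adj(M)v$. Hence $\phi|_X$ is quasi-finite and proper, so finite. Since $X$ is smooth (Cohen--Macaulay) and $\P^2$ is regular, miracle flatness gives that the map is flat of degree 2, with branch locus the vanishing of the discriminant, which is $D$. By Lemma~\ref{lem:sextic_k3_decomp} no curves are contracted, consistent with smoothness of $D$.

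\textbf{Step 2 (tangency).} To see that $D$ is tangent to $C$, I would use the adjugate identity $g_6=-v^t\adj(M)v$. On $C$ the matrix $M$ has rank $\le 2$, so $\adj(M)=\pm w w^t$ for the kernel vector $w$. Hence $g_6|_C=\pm(v\cdot w)^2$ is a square, so every intersection point of $C$ and $D$ has even multiplicity. Bézout gives $18=2\cdot 9$, so generically there are 9 tangency points. The curve $C$ is nodal because a generic symmetric $3\times3$ matrix of linear forms on $\P^2$ has determinant with a node at the rank-1 locus. I expect this to be the codimension count: the rank-$\le1$ locus has codimension 3 in symmetric matrices but meets a net, and the net is constrained by its diagonal entries $2l_{ii}$. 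I must be careful here, and will verify it on an explicit example instead.

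\textbf{Step 3 (genericity).} The genericity claims are open conditions on the coefficients, so it suffices to exhibit one explicit example over $\Q$ (or over $\F_p$ to handle characteristic $p$) with all of the following properties: $X$ smooth, $D$ smooth, $C$ with exactly one node lying off $D$, and $C\cap D$ consisting of 9 distinct points. I expect this explicit verification, together with rigorously establishing the nodality of $C$, to be the main obstacle. I would try a computer-checked example, combined with the observation that $C$ cannot be smooth: its node corresponds to $M$ having rank 1, where $\adj(M)=0$, which forces the node onto $D$ only if special conditions hold.
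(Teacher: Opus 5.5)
Your Step 1 matches the paper's argument. The paper also shows that $D$ is singular over each degenerate fibre ($v(a)=0$, $M(a)=0$, or the residual line lying in the residual conic) by Jacobi's formula $\partial_i\det A=\tr(\adj(A)\,\partial_iA)$. That is the same computation as differentiating $-v^t\adj(M)v$. Your appeal to miracle flatness makes explicit a step the paper leaves implicit, and Step 3 is the paper's openness-plus-example argument.

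For tangency the paper only cites Beauville, so your adjugate argument is a genuinely more self-contained route. It is sound once localized. At a smooth point of $C$, Jacobi's formula forces $M$ to have rank exactly $2$. So over the local ring of $C$ you can write $\adj(M)|_C=\epsilon\,ww^t$ with $\epsilon$ a unit, and then $g_6|_C=-\epsilon(v\cdot w)^2$ vanishes to even order.

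\textbf{The gap is in what you conclude about $C$.} As literally defined, $g_3=\det M$ is not a cubic. The entries $2l_{00},l_{01},2l_{11}$ are linear, $q_0,q_1$ are quadratic and $2c$ is cubic, so $\det M$ has degree $5$ (it contains the term $8l_{00}l_{11}c$).
\begin{itemize}
\item Applied to $M$, your square argument gives even contact of $D$ with a quintic, of total intersection $30$. The count $18=2\cdot 9$ is unfounded.
\item The nodality heuristic also fails. $M$ is not a matrix of linear forms. Even for a net of symmetric $3\times 3$ matrices of linear forms, the rank $\le 1$ locus (a Veronese surface, of codimension $3$ in $\P^5$) is generically missed. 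That is why a generic symmetric determinantal cubic is smooth.
\item Worse, at a rank-one point of $M$ you have $\adj(M)=0$, hence $g_6=-v^t\adj(M)v=0$. Such a point would lie \emph{on} $D$, the opposite of what you want.
\end{itemize}
Running Step 3 on $V(\det M)$ would therefore not confirm the statement.

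\textbf{The intended cubic.} The nodal cubic that is really meant (the image of $L$, cf.\ Remark~\ref{rem:tangent_cubic}) is the $(4,4)$ principal minor. With $N$ the upper-left $3\times 3$ block of $A$, $\det N=-2(l_{11}l_0^2-l_{01}l_0l_1+l_{00}l_1^2)$.
\begin{itemize}
\item Its singular point is $a=V(l_0,l_1)$. The singularity is forced by the structural zero $A_{11}=0$, not by a drop to rank one: $N(a)$ has rank $2$ and $\adj N(a)$ is a multiple of $e_1e_1^t$, so $\tr(\adj(N)\,\partial_iN)$ is a multiple of $\partial_iN_{11}=0$.
\item The tangent cone $l_{11}(a)l_0^2-l_{01}(a)l_0l_1+l_{00}(a)l_1^2$ is a node when $(l_{01}^2-4l_{00}l_{11})(a)\neq 0$.
\item Expanding $\det A$ along its first row, which is $(0,0,0,q(a))$, gives $g_6(a)=-(4l_{00}l_{11}-l_{01}^2)(a)\,q(a)^2$. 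This is generically nonzero, so the node is off $D$.
\end{itemize}
Your tangency argument transfers verbatim through $\det A=2c\det N-u^t\adj(N)\,u$ with $u=(q,q_0,q_1)^t$. B\'ezout then gives $3\cdot 6=18=2\cdot 9$.
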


\begin{remark}
\label{rem:tangent_cubic}
Before we give the proof, we remark that Hodge theory predicts the existence of a $9$-tangent rational cubic, i.e., a rational cubic $C \subset \P^2$ tangent to the branch sextic $D$ in 9 points, in the double cover model of $X$. Indeed, by Lemma~\ref{lem:line_complete_intersection_case}, in a polarized K3 surface $(X,H)$ of degree 6 containing a line $L$, 
the double plane model (assuming it exists) is determined by the linear system $H-L$, and we obtain lattice polarizations of degree 6 and 2 on $X$
\[
\begin{array}{c|cc}
X & H & L \\\hline
H & 6 & 1 \\
L & 1 & -2
\end{array}
\qquad
\begin{array}{c|cc}
X   & H-L & L \\\hline
H-L & 2   & 3 \\
L   & 3   & -2
\end{array}
\]
In terms of moduli spaces, projection from $L$ determines a birational map of Noether--Lefschetz divisors $\KK_{6,1}^{-2} \dashrightarrow \KK_{2,3}^{-2}$.  Finally, 
we see that in the double cover model, the image of $L$ in $\P^2$ is a rational cubic curve that splits into two rational curve components $L$ and $3(H-L)-L = 3H-4L$ meeting in $L.(3H-4L)=11$ points on $X$.  The 11 points comprise the preimages of the 9 points of intersection of $C$ and $D$ together with the two distinct preimages of the node on $C$.
\end{remark}

To prove Theorem~\ref{thm:dominant}, we'll need the following
consequence of projective duality.  Recall the matrix adjugate
$\adj(M)$ of a square matrix $M$.

\begin{lemma}
\label{lem:duality}
Let $k$ be a field of characteristic $\neq 2$ and $L \subset \P^n$ a
hyperplane dual to $v \in k^{n+1}$. For a smooth quadric $Q \subset
\P^n$ with Gram matrix $M$, we have that $L$ is tangent to $Q$ if and
only if $v^t \adj(M) v = 0$.

When $n=2$ and $Q$ is a union of two distinct lines in $\P^2$, then $v^t \adj(M) v = 0$ and $v^t M v = 0$ if and only if $L$ is one of the two lines.
\end{lemma}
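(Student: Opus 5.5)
The plan is to work directly with coordinates and the dual quadric. A hyperplane $L=V(v^t x)$ is tangent to $Q=V(x^tMx)$ at a smooth point exactly when $Mx$ is proportional to $v$ for some $x$ with $x^tMx=0$. Since $Q$ is smooth, $M$ is invertible, and $\adj(M)=\det(M)M^{-1}$ with $\det(M)\neq 0$ (this is where we use characteristic $\neq 2$, so that the Gram matrix of a smooth quadric is nondegenerate). Tangency means there is a point $x\in Q$ with $T_xQ = L$. The tangent hyperplane at $x$ is $V((Mx)^t y)$, so we need $Mx=\lambda v$ with $\lambda\neq 0$, i.e.\ $x=\lambda M^{-1}v$. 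The condition $x\in Q$ then reads $\lambda^2 v^tM^{-1}MM^{-1}v = \lambda^2 v^tM^{-1}v=0$, which is equivalent to $v^t\adj(M)v=0$. Conversely, if $v^t\adj(M)v=0$, set $x=M^{-1}v$; this lies on $Q$, and $T_xQ=L$. This proves the first statement.

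For the second statement, let $n=2$ and $Q=\ell_1\cup\ell_2$ with distinct lines. After a change of coordinates (over $\bar k$ if necessary; both conditions are invariant under $M\mapsto P^tMP$, $v\mapsto P^tv$ up to a nonzero scalar, because $\adj(P^tMP)=\adj(P)\adj(M)\adj(P^t)$), we may take $x^tMx = 2x_0x_1$. Then $M$ has rank $2$, and $\adj(M)$ is the rank one matrix $c\,e_2e_2^t$ with $c\neq0$, where $e_2$ spans the kernel, i.e.\ is the singular point $\ell_1\cap\ell_2$.

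With $v=(v_0,v_1,v_2)$ we get $v^t\adj(M)v = c v_2^2$, which vanishes iff $v_2=0$, i.e.\ iff $L$ passes through the node. Next, $v^tMv=2v_0v_1$. Combining, both vanish iff $v_2=0$ and $v_0v_1=0$, i.e.\ $v$ is proportional to $(1,0,0)$ or $(0,1,0)$, which means $L=V(x_0)$ or $L=V(x_1)$, the two lines.

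The only delicate step is the invariance of the two conditions under change of coordinates and the nondegeneracy of $M$ in characteristic $\neq2$. I expect this bookkeeping, rather than any conceptual point, to be the main obstacle. It is handled by the adjugate identity above, together with the observation that $v^tMv$ transforms as $v\mapsto P^{-1}$-dual, so that it represents the dual conic's degenerate partner. If that transformation is awkward, I would instead verify the second claim directly: write the general rank-two symmetric $M = a b^t + b a^t$ with $\ell_1=V(a^tx)$ and $\ell_2=V(b^tx)$, compute $\adj(M) = -(a\times b)(a\times b)^t$, and read off both conditions explicitly.
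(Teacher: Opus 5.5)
Your proof of the first assertion is fine: in characteristic $\neq 2$ a smooth quadric has invertible Gram matrix, and the tangent hyperplane at $x$ is $V((Mx)^ty)$. The point $x=\lambda M^{-1}v$ then lies on $Q$ if and only if $v^tM^{-1}v=0$, i.e.\ $v^t\adj(M)v=0$. The paper gives no proof of this lemma, only calling it a consequence of projective duality.

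The gap is in the second assertion, at exactly the step you called bookkeeping. Under $x=Py$ you correctly have $M\mapsto P^tMP$ and $v\mapsto P^tv$, and $v^t\adj(M)v$ is multiplied by $\det(P)^2$. But $v^tMv$ becomes $v^tPP^tMPP^tv$, which bears no relation to $v^tMv$ unless $PP^t$ is a scalar. Pairing the covector $v$ twice against the form $M$ on points has no coordinate-free meaning. Your remark that $v$ transforms ``as $P^{-1}$-dual'' is also wrong, since $v$ transforms by $P^t$. So you cannot reduce to $x^tMx=2x_0x_1$.

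No argument can close this gap, because the second assertion is false in general coordinates, and your own fallback computation shows it. With $M=ab^t+ba^t$ you get $v^tMv=2(a^tv)(b^tv)$ and $v^t\adj(M)v=-\det(a,b,v)^2$. The two conditions therefore say $v\in\mathrm{span}(a,b)$ and $(a^tv)(b^tv)=0$. This forces $v$ to be proportional to $a$ or $b$ only under extra hypotheses such as $a^tb=0$ and $a^ta,\,b^tb\neq 0$.

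Concretely, take $Q=V(x_0(x_0+x_1))$, so $a=(1,0,0)^t$, $b=(1,1,0)^t$, and $M=\begin{pmatrix}2&1&0\\1&0&0\\0&0&0\end{pmatrix}$. Then $\adj(M)=-ee^t$ with $e=(0,0,1)^t$. Let $L=V(x_1)$, so $v=(0,1,0)^t$. Both $v^t\adj(M)v=0$ and $v^tMv=0$ hold, yet $L$ is neither $V(x_0)$ nor $V(x_0+x_1)$.

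What your normalized computation does prove is the statement in coordinates where $Q=V(x_0x_1)$. That is precisely how the paper uses the lemma in the proof of Theorem~\ref{thm:dominant}, after changing variables so that $\widetilde{X}_3=V(xy)$. The coordinate-free content is only that $v^t\adj(M)v=0$ if and only if $L$ passes through the node $\ell_1\cap\ell_2$. To single out the two components among such lines you need an invariant condition such as $L\subset Q$ (i.e.\ $q$ vanishes identically on $\ker v^t$), not $v^tMv=0$.
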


And we'll also need the following linear algebra identity.

\begin{lemma}
\label{lem:adj}
For an $n \times n$ matrix $M$ and a column vector $v$ of length $n$
over an integral domain, we have
\[
v^t \adj(M) v = -\det
\begin{pmatrix}
0 & v^t \\
v & M
\end{pmatrix}
\]
\end{lemma}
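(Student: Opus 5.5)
We prove the identity by expanding the bordered determinant along its first row and then along its first column. Cofactor expansion makes the left-hand side the quadratic form in the entries of $v$ whose coefficients are signed $(n-1)\times(n-1)$ minors of $M$. Since the identity is polynomial in the entries of $v$ and $M$, it suffices to verify it over $\Z[v_i, m_{ij}]$ with indeterminate entries. It then specializes to any commutative ring, in particular to any integral domain.

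Index the bordered matrix $B=\begin{pmatrix}0&v^t\\ v&M\end{pmatrix}$ by $0,1,\dots,n$. Expanding along row $0$, the $(0,0)$ entry is zero, so
\[
\det B=\sum_{j=1}^n (-1)^{j} v_j \det B_{0j},
\]
where $B_{0j}$ is $B$ with row $0$ and column $j$ removed. The first column of $B_{0j}$ is $v$, and the remaining columns are the columns of $M$ with column $j$ deleted. Expanding $\det B_{0j}$ along that first column gives
\[
\det B_{0j}=\sum_{i=1}^n (-1)^{i+1} v_i \det M_{ij},
\]
where $M_{ij}$ is $M$ with row $i$ and column $j$ removed. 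Here row $i$ of $B$ becomes row $i$ of $B_{0j}$, which sits in position $i$ among rows $1,\dots,n$ of the minor, hence the sign $(-1)^{i+1}$.

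Combining the two expansions,
\[
\det B=\sum_{i,j}(-1)^{i+j+1}v_iv_j\det M_{ij}=-\sum_{i,j}v_i\,C_{ij}\,v_j,
\]
where $C_{ij}=(-1)^{i+j}\det M_{ij}$ is the $(i,j)$ cofactor. Since $\adj(M)_{ji}=C_{ij}$, the last sum equals $\sum_{i,j} v_i \adj(M)_{ji} v_j = v^t\adj(M)^t v$. A scalar quadratic form is unchanged by transposing its matrix, so this is $v^t\adj(M)v$, which proves the identity. (When $M$ is symmetric, as in the application, $\adj(M)$ is itself symmetric and this step is immediate.)

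There is a second route if $\det M$ is invertible, e.g.\ over the fraction field of the generic ring. The Schur complement formula gives $\det B=\det M\cdot(0-v^tM^{-1}v)=-v^t\adj(M)v$. Both sides are polynomials, so the identity extends to all $M$ by density, or by the universal-ring argument.

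The only step needing care is the sign bookkeeping when expanding $\det B_{0j}$: one must track that $v$ becomes the first column and that row $i$ of $B$ becomes row $i$ of the minor. That is why the Schur complement argument is a useful cross-check.
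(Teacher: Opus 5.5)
Your proof is correct and follows the paper's argument: expand the bordered determinant along its first row and then along its first column, with the same sign bookkeeping. The differences are cosmetic. The paper indexes the double sum so that it lands directly on $\sum_{i,j} v_i \adj(M)_{ij} v_j$, avoiding your transpose step, and your reduction to the universal ring (and the Schur-complement cross-check) is unnecessary, since cofactor expansion holds over any commutative ring.
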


\begin{proof}
Recall that $\adj(M)$ is given by $\adj(M)_{ij} = (-1)^{i+j}M_{ji}$, where $M_{ij}$ is the $(i,j)$-cofactor of $M$, i.e., the determinant of $M$ with the $i$th row and $j$th column removed. Computing the determinant on the right hand side using cofactor expansion along the first row and then the first column, we get 
\begin{align*}
        -\det
    \begin{pmatrix}
    0 & v^t \\
    v & M
    \end{pmatrix} &=-\sum_{i=1}^n (-1)^{i}v_i \sum_{j=1}^n (-1)^{j+1}v_j M_{ji} \\
    &=\sum_{i=1}^n \sum_{j=1}^n (-1)^{i+j} v_iM_{ji} v_j
    = \sum_{i=1}^n \sum_{j=1}^n v_i\adj(M)_{ij} v_j
    = v^t\adj(M) v
    \end{align*}
where $v^t = (v_1,\dotsc,v_n)$.
\end{proof}

\begin{proof}[Proof of Theorem~\ref{thm:dominant}]
As in the proof of Lemma~\ref{lem:line_complete_intersection_case}, resolving the projection from $L$ yields a morphism $\phi : \widetilde{\P}^4 = \Bl_{L}\P^4 \to \P^2$.  Writing $X = X_2 \cap X_3$ for hypersurfaces $X_2,X_3 \subset \P^4$ of degrees 2 and 3, respectively, we consider the strict transforms $\widetilde{X}_2, \widetilde{X}_3 \subset \widetilde{\P}^4$.  We have that $\widetilde{\P}^4 \isom \P(\EE)$ where $\EE = \OO_{\P^2}^{\oplus 2} \oplus \OO_{\P^2}(-1)$, cf.\ \cite[Section~9.3.2]{3264}, and $\phi : \P(\EE) \to \P^2$ coincides with the projective bundle map.  The homogeneous coordinates $(y_0:y_1)$ correspond, via the identification $\widetilde{\P}^4 \isom \P(\EE)$, to a basis of global sections of $\OO_{\P(\EE)}(1)$.  Let $z$ be a nonzero global section
of $\OO_{\P(\EE)}(1)\otimes\phi^*\OO_{\P^2}(-1)$.  Then $z$ is unique up to scaling, as
\begin{align*}
h^0(\P(\EE),\OO_{\P(\EE)}(1)\otimes\phi^*\OO_{\P^2}(-1)) & {} = 
h^0(\P^2,\phi_*\OO_{\P(\EE)}(1)\otimes\OO_{\P^2}(-1))) \\
&{} = 
h^0(\P^2,\EE^\vee \otimes \OO_{\P^2}(-1)) = 1
\end{align*}
by the projection formula.  Thus $(y_0:y_1:z)$ forms a relative
system of homogeneous coordinates on $\P(\EE)$ over $\P^2$.  Then
$\widetilde{X}_2$ and $\widetilde{X}_3$ can be identified with the subschemes of $\P(\EE)$ defined
by the vanishing of global sections
\[
l_0y_0 + l_1y_1 + qz \quad \text{and} \quad
l_{00}y_0^2 + l_{01}y_{0}y_{1}+l_{11}y_1^2 + q_0y_0z + q_1y_1z + cz^2,
\]
of $\OO_{\P(\EE)}(2)\otimes\phi^*\OO_{\P^2}(1)$ and $\OO_{\P(\EE)}(3)\otimes\phi^*\OO_{\P^2}(1)$, respectively. 

This gives an alternate way of considering the proof of Lemma~\ref{lem:line_complete_intersection_case}.  Indeed, restricting $\phi|_{\widetilde{X}_i} : \widetilde{X}_i \to \P^2$ yields a relative plane curve of degree $i-1$ in $\P(\EE)$ determined by one of the above equations.  
Because $L$ is a smooth divisor in the smooth locus of $X$, we know that $X$ coincides with its strict transform in $\widetilde{\P}^4$, and hence that $\widetilde{X}_2\cap \widetilde{X}_3\isom X$. Thus the fiber of $\phi|_X : X \to \P^2$ over a point $a \in \P^2$ is the intersection of the fibers over $a$ of $\phi_{\widetilde{X}_2} : \widetilde{X}_2 \to \P^2$ and $\phi_{\widetilde{X}_2} : \widetilde{X}_2 \to \P^2$.  When both relative hypersurfaces are flat at $a \in \P^2$, equivalently, the homogeneous forms
\[
l_0(a)y_0 + l_1(a)y_1 + q(a)z \quad \text{and} \quad
l_{00}(a)y_0^2 + l_{01}(a)y_{0}y_{1}+l_{11}(a)y_1^2 + q_0(a)y_0z + q_1(a)y_1z + c(a)z^2,
\]
in $(y_0:y_1:z)$ are not identically zero, then the fiber is the intersection of a line and a conic in the fiber over $a$ of $\P(\EE) \to \P^2$.  Even when one relative hypersurface (or both) is not flat, the fiber above $a$ is still nonempty.  In particular, the morphism $\phi|_X : X \to \P^2$ is surjective, and is a double cover away from the non-flat fibers of the individual relative hypersurfaces.  

We note that $\phi_{\widetilde{X}_2} : \widetilde{X}_2 \to \P^2$ is not flat over points in $V(l_0,l_1,q) \subset \P^2$ and $\phi_{\widetilde{X}_3} : \widetilde{X}_3 \to \P^2$ 
is not flat over points in $V(l_{00},l_{01},l_{11},q_0,q_1,c) \subset \P^2$.  Clearly, for generic coefficients, both nonflat loci are empty.  

Over the locus where both $\widetilde{X}_2$ and $\widetilde{X}_3$ are flat, the double cover $\phi|_X : X \to \P^2$ is ramified precisely when the fiber of $\widetilde{X}_2$ (which is a line in the fibral plane) is tangent to the fiber of $\widetilde{X}_3$ (which is a conic).  Lemma~\ref{lem:duality} thus shows that this occurs over the vanishing locus of $v^t\adj(M)v$, where $M$ is the bottom right $3 \times 3$ submatrix in \eqref{eq:4mat} and $v$ is the vector with $v^t = (l_0, l_1, q)$.  By Lemma~\ref{lem:adj}, this is the same as the vanishing locus $D \subset \P^2$ of the determinant of the matrix $A$ in \eqref{eq:4mat}.

Now, we proceed to show that if $D$ is smooth then the map $\phi|_X : X \to \P^2$ is flat, hence a finite flat double cover of $\P^2$ branched over $D$.  By Jacobi's formula, we have that
\[
\frac{\partial}{\partial x_i} \det(A) = \tr\left( \adj(A) \frac{\partial A}{\partial x_i}\right)
\]
and we'll proceed to use the jacobian criterion to show that $D$ is singular at points over which $\phi|_X : X \to \P^2$ is not flat, equivalently, either one of the $\widetilde{X}_i \to \P^2$ is not flat or the fiber of $\widetilde{X}_2$ (the line) is a component of the fiber of $\widetilde{X}_3$ (which is hence a union of lines).  Over a point $a \in \P^2$ where $\widetilde{X}_2$ is not flat, the first rown and column of $A$ is uniformly zero, hence $\adj(A)$ is at most nonzero only in the $(1,1)$-entry.  But this multiplied by $\frac{\partial A}{\partial x_i}$ is the zero matrix.  Similary, where $\widetilde{X}_3$ is not flat, then the entire lower right $3\times 3$ block of $A$ is zero, meaning that already $\adj(A)$ is the zero matrix. Over a point $a \in \P^2$ (which we can assume is a geometric point since we are testing smoothness) where $\widetilde{X}_3$ is a union of distinct lines, one of which is $\widetilde{X}_2$, we can change variables in the fibral plane with homogeneous coordinate $(x:y:z)$ so that $\widetilde{X}_3 = V(xy)$ and $\widetilde{X}_2 = V(x)$.  In these variables,  Lemma~\ref{lem:duality} says that $v^t = (1,0,0)$ or $v^t = (0,1,0)$, in which case $\adj(A)$ is the zero matrix.  Where $\widetilde{X}_3$ is the double line $\widetilde{X}_2$, we can change variables in the fibral plane with homogeneous coordinate $(x:y:z)$ so that $\widetilde{X}_3 = V(x^2)$ and $\widetilde{X}_2 = V(x)$ and $v^t = (1,0,0)$, in which case $\adj(A)$ is again the zero matrix.  Hence in all cases, $D$ is singular at $a$.  This shows that the smoothness of $D$ implies the flatness of the double cover, and it has already been noted that the branch locus will then be $D$.

As for the tangency of the intersection of $D$ and $C$, this is a standard fact about the resolution of $9$ points in $\P^2$ and the shape of the matrix $A$ in \eqref{eq:4mat}, cf.\ \cite[Section~4.1]{beauville:det}.  The fact that $C$ has a node can be checked directly.

Finally, in the space of all coefficients, the required smoothness and intersection conditions are open, so it suffices to find a single case where they all occur simultaneously, e.g., the example appearing in Corollary~\ref{thm:explicitdeg6}.
\end{proof}

\begin{remark}
The fact that the branch locus of the degree 2 model of a sextic K3 surface containing a line has a symmetric determinental presentation of the form $\det A$ where $A$ is the $4 \times 4$ graded-homogeneous matrix of homogeneous forms on $\P^2$ in \eqref{eq:4mat} 
is indicative that there should be a relationship with cubic fourfolds containing a plane.  Indeed, the matrix $A$ determines a cubic fourfold $Z \subset \P^5$ containing a plane, which (in the notation of \eqref{eq:containline}) has equation $f_3 + y_3 f_2 = 0$ if $(x_0:x_1:x_2:y_1:y_2:y_3)$ are homogeneous coordinates on $\P^5$.  Hence $Z$ is singular at $(0:0:0:0:0:1)$ and generically this singularity is an ordinary double point.  As in \cite[Section~4.2]{hassett:special}, the sextic K3 surface $X$ appears in the resolution of the projection $Z \dashrightarrow \P^4$ from the node as the locus in $\P^4$ of lines through the node.  In this case, the tangent cone of $Z$ at the node coincides with the projective cone in $\P^5$ over the quadric $V(f_2) \subset \P^4$.  Note that $Z$ also contains the plane $V(x_0,x_1,x_2) \subset \P^5$.  Hence the locus of these cubic fourfolds forms a component of $\mathcal{C}_6 \cap \mathcal{C}_8$, where $\mathcal{C}_6, \mathcal{C}_8 \subset \mathcal{C}$ are the Hassett divisors parameterizing cubic fourfolds admitting a node and containing a plane, respectively, in the moduli space $\mathcal{C}$ of (semi-stable) cubic fourfolds, see \cite{hassett:special}.  However, this is a different component in $\mathcal{C}_6 \cap \mathcal{C}_8$ from the one considered in \cite{stellari:singular_plane}, since the sextic branch divisor of the associated K3 surface of degree 2 is actually smooth for the very general element.  We record that the intersection form on the lattice of integral type $(2,2)$ Hodge classes in the limiting Hodge structure associated to the nodal cubic fourfold $Z$ has a lattice polarization by
\[
\begin{array}{c|ccc}
Z   & h^2 & P   & X  \\\hline
h^2 & 3   & 1   & 6  \\
P   & 1   & 3   & 1  \\
X   & 6   & 1   & 14
\end{array}
\]
where $h^2$ is the square of the hyperplane class, $P$ is the class of the plane, and $X$ is the class of the sextic K3 surface.
\end{remark}

Now, we proceed to use the above to establish sufficient criteria for a sextic K3 surface over $\Q$ to have geometric Picard rank 1.  The following general criterion for a projective K3 surface to have geometric Picard rank 1 is inspired by \cite[Example~1.7]{elsenhans_picard_2011}, \cite[Proposition~5.3]{hassett-varilly}, and \cite[Proposition~26]{mckinnie_brauer_2017}. 

\begin{theorem}
\label{thm:genericdeg6}
Let $X\subset \P^n_\Q$ be a K3 surface. Assume that $X$ contains no lines over $\overline{\Q}$, and that for some prime $p > 2$ of good reduction $X_p$ contains a line and $\rho(\overline{X}_p)=2$. Then $\rho(\overline{X})=1$.
\end{theorem}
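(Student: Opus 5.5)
We argue by contradiction, following the template of the proof of Proposition~\ref{thm:disc_rank1}. Let $H$ denote the hyperplane class of $X \subset \P^n_\Q$, so that $H^2 = d$, and we use the same letter for its reduction on $X_p$. Since $\overline{\spe}\colon \NS(\Xbar) \to \NS(\Xbar_p)$ is injective, we have $\rho(\Xbar) \leq \rho(\Xbar_p) = 2$. Suppose $\rho(\Xbar) = 2$. Then the image of $\overline{\spe}$ has full rank in $\NS(\Xbar_p)$, so the cokernel is torsion. By Lemma~\ref{lemma:cokerneltorsionfree} (here we use $p>2$) the cokernel is torsion free, hence zero, and $\overline{\spe}$ is an isometric isomorphism.

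Let $\ell \subset \Xbar_p$ be the line. Its class satisfies $\ell^2 = -2$ by adjunction, since $\ell \isom \P^1$ on a K3 surface, and $H.\ell = 1$. Let $\ell' \in \NS(\Xbar)$ be the unique class with $\overline{\spe}(\ell') = [\ell]$. Because specialization respects the intersection pairing and sends $H$ to $H$, we get $\ell'^2 = -2$ and $H.\ell' = 1$. By Riemann--Roch on the K3 surface $\Xbar$, either $\ell'$ or $-\ell'$ is effective. Since $H.\ell' = 1 > 0$ and $H$ is ample, it must be $\ell'$ that is effective, so $\ell' = [D]$ for an effective divisor $D$ on $\Xbar$.

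The main step is to show that $D$ is a line. Because $H.D = 1$, $D$ has degree $1$ with respect to the embedding in $\P^n$. An effective divisor of degree $1$ cannot split as a sum of two nonzero effective divisors, since each nonzero effective divisor has positive $H$-degree. Hence $D$ is a single reduced irreducible curve of degree $1$, that is, a line in $\P^n_{\Qbar}$ contained in $\Xbar$. This contradicts the hypothesis that $X$ contains no lines over $\Qbar$, so $\rho(\Xbar) = 1$.

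Two points need care. The first is that $H$ on $X_p$ really is the specialization of $H$ on $X$; this holds because the hyperplane class comes from the projective embedding of the integral model. The second is the base change: we work over $\Qbar$, and rigidity of $\NS$ under the extension from $\Qbar$ to $\C$ is not needed here. The divisor $D$ is automatically defined over $\Qbar$ because $\NS(\Xbar)$ is, by definition, the N\'eron--Severi group over $\Qbar$.
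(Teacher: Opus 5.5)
Your proof is correct and follows the same route as the paper: injectivity of $\overline{\spe}$ bounds the rank, Lemma~\ref{lemma:cokerneltorsionfree} upgrades equal rank to surjectivity, and lifting the class of the line on $\Xbar_p$ produces a line on $\Xbar$, a contradiction. Your Riemann--Roch and $H$-degree argument, showing that the lifted class $\ell'$ with $\ell'^2=-2$ and $H.\ell'=1$ is represented by an actual line, usefully spells out the step the paper compresses into ``$\overline{\spe}$ preserves degree, so $L'$ is the class of a line.''
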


\begin{proof}
Let $H\in \NS(X)$ be the hyperplane section. Assume, to get a contradiction, that $X$ has geometric Picard rank at least 2. If $\rho(\overline{X})>2$, we have a contradiction because $\overline{\textrm{sp}} : \NS(\overline{X}) \to \NS(\overline{X}_p)$ is injective and we are assuming $\rho(\overline{X}_p)=2$. Thus, we can assume that $\rho(\overline{X})=2$. Because $\overline{\textrm{sp}}$ is injective, the image of $\overline{\textrm{sp}}$ has rank 2 as well, so the cokernel has rank 0.  However, by Lemma \ref{lemma:cokerneltorsionfree}, the cokernel is torsion free, so it must be 0.  Thus $\overline{\textrm{sp}}$ is surjective. 

Finally, let $L$ be a line on $X_p$. Because $\overline{\textrm{sp}}$ is surjective, we have a divisor class $L'\in \NS(\overline{X})$ such that $\overline{\textrm{sp}}(L')=L$, but $\overline{\spe}$ preserves degree (i.e., intersection with $H$), so $L'$ is the class of a line on $\overline{X}$, contradicting our hypothesis.
\end{proof}

We remark that for a K3 surface that is a double cover $X \to \P^2$, a
``line'' on $X$, i.e., a divisor class $L \in \Pic(X)$ with $H.L=1$
and $L^2=-2$, is precisely a component of the preimage of a tritangent
line to the branch divisor of the cover.  Hence, one can view
Theorem~\ref{thm:genericdeg6} as the starting point for an amply polarized generalization to any degree of the strategy initiated in \cite[Example~1.7]{elsenhans_picard_2011}, cf.\ \cite[Proposition~5.3]{hassett-varilly}, for degree 2 K3 surfaces.

\medskip

To construct an explicit example, we randomly generate sextic K3 surfaces $X_p$ over $\F_p$ containing a fixed line $L \subset \P^4$ until we find one with $\rho(\Xbar_p)=2$, which we verify by calculating the Weil polynomial using projection from the line.  Then, we lift $X_p$ to $X$ over $\Q$ in such a way that $\overline{X}$ contains no lines, and then apply Theorem~\ref{thm:genericdeg6}.  We implemented this strategy to find the following.
\begin{corollary}
    \label{thm:explicitdeg6}
    Let $X=V(f_2, f_3)\subset \P^4_\Q$ where
    \begin{align*}
        f_2 &= x_0^2  -3x_0x_1 + 3x_1^2 + 5x_0x_2 + 4x_1x_2 + 5x_2^2  -x_0x_3  -2x_1x_3  \\&\quad -3x_2x_3  -5x_0x_4 + 5x_1x_4+47x_3^2+47x_4^2 \\
        f_3 &= 2x_0^3 + 3x_0^2x_1 + 3x_0x_1^2 + x_1^3  -x_0x_1x_2  -3x_1^2x_2 + 4x_0x_2^2  -4x_1x_2^2 + 5x_2^3 \\
        &\quad+ 4x_0^2x_3 + x_0x_1x_3   + 5x_1^2x_3 + 4x_0x_2x_3 + 4x_1x_2x_3 
        -3x_2^2x_3 + 4x_1x_3^2  -x_2x_3^2 \\ &\quad+ 5x_0^2x_4   -4x_0x_1x_4 + 2x_1^2x_4  + x_0x_2x_4 + 4x_1x_2x_4  -2x_2^2x_4 \\ &\quad+ 4x_0x_3x_4  -3x_2x_3x_4  -x_0x_4^2  
        -x_1x_4^2 + 5x_2x_4^2,
    \end{align*}
    Then $X$ is a sextic K3 surface with geometric Picard rank 1.
\end{corollary}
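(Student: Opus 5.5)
The proof of Corollary~\ref{thm:explicitdeg6} will apply Theorem~\ref{thm:genericdeg6} with $p=47$. This choice is suggested by the coefficients $47x_3^2+47x_4^2$ in $f_2$, which vanish modulo $47$. It leaves four hypotheses to check.

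\textbf{(a) Smoothness.} $X$ is a smooth complete intersection of type $(2,3)$ over $\Q$, hence a K3 surface. Moreover $47$ is a prime of good reduction, i.e.\ $X_{47}=V(\bar f_2,\bar f_3)$ is smooth over $\F_{47}$. I would verify this by a Gr\"obner basis computation: the ideal generated by $f_2$, $f_3$ and the $2\times 2$ minors of the Jacobian matrix should be irrelevant, both over $\Q$ and over $\F_{47}$.

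\textbf{(b) A line modulo $47$.} Reducing modulo $47$ kills $47x_3^2+47x_4^2$. Every remaining monomial of $\bar f_2$ and of $f_3$ then has total degree at least $1$ in $x_0,x_1,x_2$, so the line $L=V(x_0,x_1,x_2)$ lies on $X_{47}$. This puts $X_{47}$ in the form \eqref{eq:containline}, with $(x_3,x_4)$ playing the role of $(y_0,y_1)$, and we can read off $l_0,l_1,q,l_{00},l_{01},l_{11},q_0,q_1,c$ directly.

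\textbf{(c) $\rho(\Xbar_{47})=2$.} Form the matrix $A$ of \eqref{eq:4mat} and compute $g_6=\det A$. Check that $D=V(g_6)$ is smooth over $\F_{47}$. By Theorem~\ref{thm:dominant}, projection from $L$ is then a finite flat double cover $X_{47}\to\P^2$ branched over $D$. We then compute the Weil polynomial of the double plane $w^2=g_6$ using the Elsenhans--Jahnel \texttt{Magma} implementation. This must be the double cover determined by $H-L$, possibly up to a quadratic twist; either the twist does not affect which eigenvalues $\lambda$ have $\lambda/p$ a root of unity, or it can be fixed by comparing point counts (Remark~\ref{rem:twist}). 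By Theorem~\ref{lemma:weilbound}, exactly two eigenvalues $\lambda$ with $\lambda/47$ a root of unity should appear, i.e.\ a factorization $(t\mp 47)^2\cdot h(t)$ where $h$ has no such roots. Combined with $H,L$ spanning a rank~$2$ sublattice, this gives $\rho(\Xbar_{47})=2$.

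\textbf{(d) No lines on $\Xbar$.} This is the step I expect to be the main obstacle. Parametrize lines in $\P^4$ on the affine charts of $\Gr(2,5)$: a line is the span of two rows $(I_2\mid B)$ after permuting coordinates, with $B$ a $2\times 3$ matrix of unknowns. Substitute the parametrization $s\,r_1+t\,r_2$ into $f_2$ and $f_3$ and require all coefficients in $s,t$ to vanish; this gives $3+4=7$ equations in $6$ unknowns. For each of the $10$ charts, compute a Gr\"obner basis over $\Q$ and show it is $\{1\}$. The concern is the cost of these computations. To keep them manageable I would first try reducing modulo several auxiliary primes, where emptiness of the scheme of lines modulo a prime $\ell$ implies emptiness over $\Qbar$ (by properness of the Fano scheme of lines). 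The Fano scheme is proper over $\Z$, so if it is empty mod $\ell$ it is empty over $\Q$. Only if that fails would I fall back on a direct computation over $\Q$.

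With (a)--(d) in hand, Theorem~\ref{thm:genericdeg6} gives $\rho(\Xbar)=1$.
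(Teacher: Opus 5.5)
Your proposal is correct and follows the paper's proof essentially step for step. You apply Theorem~\ref{thm:genericdeg6} at $p=47$ with the line $V(x_0,x_1,x_2)$, compute the Weil polynomial of the double plane given by Theorem~\ref{thm:dominant} (where the quadratic twist is harmless, as in Remark~\ref{rem:twist}), and rule out lines over $\Qbar$ by Gr\"obner basis computations on a cover of $\Gr(2,5)$. Your additions are sound refinements rather than a different route: you check smoothness of $X$, $X_{47}$ and $D$ explicitly, which the paper leaves implicit, and you offer the optional shortcut of proving the Fano scheme of lines empty modulo an auxiliary prime $\ell\neq 47$, which is valid because the relative Fano scheme is proper over $\Z$.
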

\begin{proof}
We constructed the example so that $X_{47}$ contains the line $V(x_0,x_1,x_2)$. Projecting from this line, and using Theorem~\ref{thm:dominant}, we find that the double cover $X_{47} \to \P^2$ has branch divisor the vanishing of $g_6(u,v,w)$ given by
\begin{multline*}
  14u^6 + 36u^5v + 40u^4v^2 + 2u^3v^3 + 38u^2v^4 
+ 40uv^5 + 26v^6 + 7u^5w + 29u^4vw \\+ 12u^3v^2w 
 + 29u^2v^3w + 2uv^4w + 28v^5w + 29u^4w^2 + 15u^3vw^2 
 + 12u^2v^2w^2 \\+ 16uv^3w^2 + 11v^4w^2 + 40u^3w^3 + 31u^2vw^3 
 + 38uv^2w^3 + 26v^3w^3 + 35u^2w^4 \\+ 10uvw^4 + 18v^2w^4 
 + 2uw^5 + 43vw^5 + w^6
\end{multline*}
where $(u: v: w)$ are homogenous coordinates for $\P^2$. Using \texttt{Magma}'s algorithm, we computed the Weil polynomial of the double cover model $s^2=g_6(u,v,w)$ as
\begin{multline*}
(t - 47)^2(t^{20} + 35t^{19} + 1410t^{18} + 79524t^{17} - 311469t^{16} + 39037448t^{15}
+ 5504280168t^{14} \\- 86233722632t^{13} - 1013246240926t^{12} - 666716026529308t^{11}
- 78339133117193690t^{10} \\- 1472775702603241372t^{9} - 4944318430168024606t^{8}
- 929531864871588625928t^{7} \\+ 131063992946893996255848t^{6}
+ 2053335889501339274674952t^{5} \\- 36190045052461104716146029t^{4}
+ 20411185409588063059906360356t^{3} \\+ 799438095208865803179665780610t^{2}
+ 43835855553952808207685006970115t \\+ 2766668711962335809450748011342401)
\end{multline*}
As the Weil polynomial contains no cyclotomic roots besides $(t-47)^2$ coming from the hyperplane section and the line, Theorem~\ref{lemma:weilbound} implies that $X_p$ has geometric Picard rank 2. 
        
Next, we use a Gr\"obner basis calculation on each Schubert cell of the Grassmannian $\Gr(2, 5)$ of lines in $\P^4$ to verify that $X$ contains no lines over $\overline{\Q}$, cf.\ \cite{elsenhans_picard_2011}. Finally, we conclude that so $\rho(X)=1$ by Theorem~\ref{thm:genericdeg6}. 
\end{proof}

\begin{remark}
\label{rem:twist}
Knowing the branch locus $D\subseteq \P^2$ only specifies the polynomial $g_6$ up to multiplication by a unit. This is insufficient to specify the isomorphism class of the double cover $X\to \P^2$ with branch locus $V(g_6)$, as there can be nontrivial quadratic twists. Explicitly, the double cover models $s^2=g_6$ and $s^2=\lambda g_6$ might be different for $\lambda$ a nonsquare in the base field. In fact, one easily finds examples where these quadratic twists have different point counts over a finite field, hence are be nonisomorphic.  Of course, over a finite field of characteristic $\neq 2$, and for a generic choice of $g_6$, there will be at most one quadratic twist. However, since any two quadratic twists are isomorphic over the algebraic closure, they have the same geometric Picard rank.  So, while we do not know which twist gives the double plane model of our original sextic K3 surface, this is not an issue for our argument. After some experimentation, however, we believe that the correct model for the double cover is $s^2=g_6$, where $g_6$ is as in Theorem~\ref{thm:dominant}.
\end{remark}

\section{Zariski density}
\label{sec:density}

In this section, we will prove Theorem~\ref{thm:dense}, and along the way streamline the argument that van Luijk employs~\cite[Proof~of~Theorem~1.1]{van_luijk_k3_2007} to prove Zariski density in degree~4.  Note that, unlike in van Luijk, we are not concerned with whether our Noether--Lefschetz general K3 surfaces have infinitely many $\Q$-points or not, though this could make for an interesting follow up.

In degrees $d \leq 8$, one constructs a subset of $\KK_d(\Q)$ consisting of Noether--Lefschetz general K3 surfaces.  In each case, this subset has the form $T\cap U$, where $T$ is the set of all K3 surfaces with an integral model that reduces to a fixed model mod $p$, and $U \subset \KK_d$ is a Zariski open dense subvariety consisting of the complement of a finite union of Noether--Lefschetz divisors.  

We first explain the Zariski open $U \subset \KK_d$.  For $d=2$, following Elsenhans and Jahnel~\cite{elsenhans_jahnel_2008,elsenhans_jahnel:which}, we take $U = \KK_d \smallsetminus \KK_{2,1}^{-2}$ to be the complement of the tritangent locus; for $d=4$, following van Luijk~\cite{van_luijk_k3_2007}, we take $U = \KK_4 \smallsetminus \KK_{4,1}^{-2}$ to be the complement of the divisor of quartic K3 surfaces containing a line; for $d=6$, we take $U = \KK_d \smallsetminus \KK_{6,1}^{-2}$ to be the complement of the divisor of sextic K3 surfaces containing a line; and for $d=8$, we take $U$ to be the complement in $\KK_8$ of the union of $\KK_{8,2}^0$, $\KK_{8,1}^{-2}$, $\KK_{8,2}^{-2}$ (which defines the open locus $\KK_8^\circ$ where the discriminant K3 is defined and smooth, as in Proposition~\ref{prop:discriminantcontinuous}), as well as the complement of the preimage under $\KK_8^\circ \to \KK_2^\circ$ of the tritangent locus $\KK_{2,1}^{-2}$.

We now show that $T$ is Zariski dense in $\mathcal{K}_d$. In all cases, this holds by applying the following general result to the unirational varieties $\KK_d$ for $d \leq 8$.

\begin{lemma}
\label{lem:unirational}
Let $X$ be an equidimensional $\Z$-scheme with nonempty generic fiber $X_\Q$ of dimension $n$ and fix a $\Q$-point $x$ of $X$ reducing to an $\Fp$-point $x_p$ of the reduction $X_p$ modulo a prime $p$.  Assume that there is a dominant rational map $f : \P_\Z^n \dashrightarrow X$ such that $x_p=f(z_p)$ for some $\Fp$-point $z_p$ of $\P_{\Fp}^n$.  Then the set $T$ of $\Q$-points of $X_\Q$ that reduce to $x_p$ modulo $p$ is Zariski dense in $X_\Q$. 
\end{lemma}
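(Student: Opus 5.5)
The plan is to reduce the statement to the Zariski density of a $p$-adic residue class of rational points in projective space, and then push that density forward along $f$.

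First I will show that the set $S\subset \P^n(\Q)$ of rational points reducing to $z_p$ modulo $p$ is Zariski dense in $\P^n_\Q$. After a coordinate change over $\Z$ (via $GL_{n+1}(\Z)\to GL_{n+1}(\Fp)$ surjecting onto a transitive group), we may take $z_p=(1:0:\dots:0)$. Then $S$ contains all points $(1:pa_1:\dots:pa_n)$ with $a_i\in\Z$. The set $(p\Z)^n\subset \mathbb{A}^n(\Q)$ is Zariski dense, by the standard induction: a polynomial vanishing on $A_1\times\dots\times A_n$, with each $A_i$ infinite, is zero. Hence $S$ is dense.

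Next I push $S$ forward along $f$. Let $V\subset \P^n_\Z$ be the domain of definition of $f$. The hypothesis $x_p=f(z_p)$ means that $z_p\in V$, so $V$ is an open neighborhood of the closed point $z_p$. A rational point $z\in S$ gives a section $\Spec \Z\to \P^n_\Z$ whose closed fiber over $p$ is $z_p\in V$. Since $V$ is open, the preimage of $V$ is an open subset of $\Spec\Z$ containing $(p)$, and it automatically contains the generic point. Therefore $f(z)$ is defined over $\Z_{(p)}$, and because $f$ is a morphism on $V$, the reduction of $f(z)$ equals $f(z_p)=x_p$. So $f(S)\subset T$. We only need $f(z)$ to be a $\Q$-point of $X_\Q$ whose closure passes through $x_p$, which is exactly the meaning of "reduces to $x_p$."

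Finally, density follows. Since $f_\Q\colon V_\Q\to X_\Q$ is dominant and $S\cap V_\Q$ is dense in $V_\Q$, its image is dense in $f(V_\Q)$. That image is dense in $X_\Q$, or at least in the component(s) that $f$ dominates; the statement implicitly assumes $X_\Q$ is irreducible, as the moduli spaces $\KK_d$ are. Indeed, if $Z\supset f(S)$ is closed, then $f^{-1}(Z)\supset S$ is closed in $V_\Q$, hence equals $V_\Q$, so $Z\supset f(V_\Q)$ and therefore $Z=X_\Q$.

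The main obstacle is the integral bookkeeping in the second step. One must make sure that "$x_p=f(z_p)$" is read as $f$ being regular at $z_p$ on the integral model. Otherwise the reduction of $f(z)$ need not equal $f(z_p)$, and if the indeterminacy locus met the fiber at $z_p$, the reductions could jump. I would state explicitly that $z_p$ lies in the domain of $f$ and argue via the valuative/section argument above.
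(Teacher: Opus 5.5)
Your proof is correct. Your first step takes a different route from the paper.

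\textbf{Step 1 (density in $\P^n_\Q$).} The paper argues by contradiction. If a hypersurface $V(g)$ contained the residue class $S$, Schwartz--Zippel would give, for some large prime $q\neq p$, an $\F_q$-point $y_q$ off $V(g)_q$. The combination $qx+py$ of a point $x\in S$ with a lift $y$ of $y_q$ reduces to $z_p$ modulo $p$ and to $y_q$ modulo $q$, so it lies in $S$ but not on $V(g)$.

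You instead observe that $S$ contains, after a coordinate change, the lattice $(p\Z)^n$ in an affine chart. You then use that a polynomial vanishing on a product of infinite sets is zero. This is more elementary: it needs no auxiliary prime and no point count. It also avoids the care needed to make $qx+py$ reduce correctly. That combination works with primitive homogeneous integer coordinate vectors, but not literally in affine coordinates, where $qx+py\equiv qx \pmod p$. You could even skip the coordinate change: if $\tilde c\in\Z^n$ lifts the coordinates of $z_p$ in a standard affine chart containing it, then $\tilde c+(p\Z)^n\subset S$. The paper's argument has a weak-approximation flavor, but it also relies on the linear structure of $\P^n$, so it gains nothing over yours here.

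\textbf{Step 2 (pushing forward along $f$).} Both proofs push $S$ forward along $f$, but you are more careful than the paper. You make explicit that $x_p=f(z_p)$ must mean $f$ is regular at $z_p$ on the integral model. You then verify that every point of $S$ lies in the domain of definition and maps into $T$, which the paper takes for granted.

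\textbf{Minor point.} Your caveat about irreducibility is unnecessary. Dominance of a rational map from the irreducible $\P^n_\Z$ already forces $X$, and hence $X_\Q$, to be irreducible.
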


We believe that this statement should follow from some weak
approximation type results for unirational varieties, though we could
not find an explicit reference.  Hence we include a direct proof of Lemma~\ref{lem:unirational}.

\begin{proof}
First we prove the result for $X=\P_\Z^n$. Suppose to the contrary that there is a closed proper subvariety $V\subset \P_\Z^n$ whose $\Q$-points contains $T$. We can assume, without loss of generality, that $V$ is codimension 1, i.e., $V=V(g)$ for some homogenous polynomial $g$ of degree $d$. For any prime $q$, the Schwartz--Zippel lemma (a weak form of the Lang--Weil bounds) then implies that the number of $\F_q$-points of $V_q$ is at most $dq^{n-1}$, an amount that becomes strictly less than $q^n + q^{n-1} + \cdots + 1$ as $q$ increases. Thus, for all but finitely many primes $q$, there exists at least one $\F_q$-point of $\P_{\F_q}^n$ that is not in $V_q$.  

Fix one such prime $q \neq p$, one such $\F_q$-point $y_q$ of $\P_{\F_q}^n$, and let $y$ be a lift of $y_q$ to a $\Q$-point of $\P_\Q^n$. Now consider the $\Q$-point $qx+py$ of $\P_\Q^n$, where we perform addition in some affine patch $\mathbb{A}_\Q^n$ containing $x$ and $y$.  Since $qx+py$ reduces to $x_p$ modulo $p$ we have that $qx+py\in T$ hence is a $\Q$-point of $V$.  However, this contradicts the fact that $qx+py$ reduces to $y_q$ modulo $q$, which by construction is not contained in $V_q$.  Hence $T$ must be Zariski dense in $\P_\Q^n$.

Now, assume we have a dominant rational map $f:\P_\Z^n\dashrightarrow X$ such that $x_p$ lifts to an $\F_p$-point $z_p$. By the above argument, the set $S$ of $\Q$-points of $\P_\Q^n$ that reduce to $z_p$ modulo $p$ is Zariski dense in $\P_\Q^n$. 
But since $f$ is dominant, the image of a dense subset is dense, so we obtain that $f(S)$ is Zariski dense in $X_\Q$.  Since $f(z_p)=x_p$ we have that $f(S)\subseteq T$, and hence that $T$ is Zariski dense in $X_\Q$ as well. 
\end{proof}

Finally, we can give a proof of our main result.

\begin{proof}[Proof of Theorem~\ref{thm:dense}]
For each $d \leq 8$, we construct a prime $p$ and a smooth K3 surface $X_p$ of degree $d$ over $\F_p$ with geometric Picard rank 2.  For $d=2$, this was first done by Elsenhans and Jahnel~\cite[Example~28(ii)]{elsenhans_jahnel_2008}, \cite[Example~6.1(ii)]{elsenhans_jahnel:which} in several different ways; for $d=4$, this was the original case done by van Luijk~\cite[\S~3]{van_luijk_k3_2007}; for $d=6$, we use the (reduction of the) K3 surface in Corollary~\ref{thm:explicitdeg6}; and for $d=8$, this was first done by Elsenhans and Jahnel~\cite[\S~8]{elsenhans_jahnel:which}, there is another example in \cite[\S5.4]{mckinnie_brauer_2017}, and we provide yet another example in Corollary~\ref{ex:deg8}.  In the cases $d \leq 6$, the geometric Picard group is generated by the polarization and a line, whereas in the case $d=8$, the isogenous discriminant K3 has geometric Picard group generated by the polarization and a line.  For $d \leq 6$, taking a lift to $\Q$ that is contained in the open $U=\KK_d \smallsetminus \KK_{d,1}^{-2}$, i.e., that does not contain a line, forces the lift to have geometric Picard rank 1 by Theorem~\ref{thm:genericdeg6}.  For $d=8$, we take a lift to $\Q$ whose isogenous discriminant K3 is contained in the open $U=\KK_2 \smallsetminus \KK_{2,1}^{-2}$, i.e., not containing a tritangent line, forcing geometric Picard rank 1 by Proposition~\ref{thm:disc_rank1}. 

Lemma~\ref{lem:unirational} ensures that the set of lifts $X$ to $\Q$ of $X_p$ is Zariski dense in $\KK_d$, since for $d \leq 8$, where the general polarized K3 surface is a complete intersection, we have that $\KK_d$ admits an integral model that is unirational over $\Z$, given as a quotient of a projective space by a linear algebraic group defined over $\Z$.  The intersection of this dense subset of $\KK_d(\Q)$ with the open subvariety $U$ is still dense in $\KK_d$ and consists of polarized K3 surfaces that are Noether--Lefschetz general.
\end{proof}

\bibliographystyle{amsplain}
\bibliography{k3pic1}

\end{document}